 \newcolumntype{C}{>{\centering\arraybackslash}X} % centered version of "X" type
\newtheorem{thm}{Theorem}[section]
\newtheorem{cor}[thm]{Corollary}
\newtheorem{lem}[thm]{Lemma}
\newtheorem{prop}[thm]{Proposition}
\newtheorem{ass}[thm]{Assumption}
\def\BibTeX{{\rm B\kern-.05em{\sc i\kern-.025em b}\kern-.08em
    T\kern-.1667em\lower.7ex\hbox{E}\kern-.125emX}}
\begin{document}
\title{Gradient-push algorithm for distributed optimization with event-triggered communications}
\author{Jimyeong Kim, Woocheol Choi
\thanks{  The authors were supported by the National Research Foundation of Korea (NRF) grants funded by the Korea government No. NRF- 2016R1A5A1008055 and No. NRF-
2021R1F1A1059671. }
\thanks{J. Kim and W. Choi are with the Department of Mathematics, Sungkyunkwan University, Suwon 16419 (e-mail:choiwc@skku.edu;  jimkim@skku.edu).}}

\markboth{ }%
{How to Use the IEEEtran \LaTeX \ Templates}

\maketitle

\begin{abstract}
Decentralized optimization problems consist of multiple agents connected by a network. The agents have each local cost function, and the goal is to minimize the sum of the functions cooperatively. It requires the agents communicate with each other, and reducing the cost for communication is desired for a communication-limited environment. In this work, we propose a gradient-push algorithm involving event-triggered communication on directed network. Each agent sends its state information to its neighbors only when the difference between the latest  sent state and the current state is larger than a threshold. The convergence of the algorithm is established under a decay and a summability condition on a stepsize and a triggering threshold. Numerical experiments are presented to support the effectiveness and the convergence results of the algorithm.
\end{abstract}

\begin{IEEEkeywords}
Decentralized optimization, event-triggering, gradient push algorithm, directed graph.
\end{IEEEkeywords}

\section{Introduction}
In recent years, distributed optimization techniques over a multi-agent network have attracted considerable attention since they play an essential role in engineering problems in distributed control \cite{BCM, CYRC}, signal processing \cite{Boyd Gossip, QT} and the machine learning problems \cite{BCN, FCG, RB}. %Distributed optimization over time-varying directed graph
In distributed optimization, many agents have their own local cost and try to find a minimizer of the sum of those local cost functions in a collaborative way that each agent only uses the information from its neighboring agents where the neighborhood structure is depicted as a graph, often undirected or directed. 
%The algorithms consist of  optimization step for local function and consensus step by communication.

There has been a significant interest in consensus-based distributed gradient method. One fundamental work is \cite{Ned DGD} which developed the distributed gradient descent on undirected graph.  This algorithm consists of local gradient step and consensus step based on communication between neighboring agents. The convergence property of the algorithm has been studied in the works \cite{Ned DGD, R Ned V - stochastic, I, YLY}. There are also various distributed algorithms containing the distributed dual averaging method \cite{DAW}, consensus-based dual decomposition \cite{FMGP, SJ}, and the alternating direction method of multipliers (ADMM) based algorithms \cite{MJ, SLYW}. %We also refer to the survey papers \cite{Nedic review, YYWY}.  However, all these papers assumed fixed or undirected networks which have a doubly stochastic matrix. 
These algorithms work with doubly-stochastic matrix associated with the undirected graph. 
%A distributed optimization algorithm was designed by Nedic and Olshevsky \cite{Ned Push sum} for directed graph. 

The gradient-push algorithm was  introduced in \cite{Ned Push sum} to solve the distributed optimization for  directed graph which utilizes push-sum algorithms \cite{KDG, TLR2}. The communication of this algorithm is represented by column stochastic matrix, which requires each agent to know its out-degree at each time, without having the information of the number of agents. This algorithm has influenced a significant impact of later works. The work  \cite{NO1} studied
the  algorithm with gradient having a noise. The time varying distributed optimization was also considered \cite{AGL} using the gradient-push algorithm. Recently, stochastic gradient-push algorithm was designed for large scale deep learning problem \cite{ALBR}. This work was also extended in \cite{TMHP} further to quantized communication settings. We also refer to \cite{AR, ZY} where the authors studied the asynchronous version of the gradient-push algorithm. 

Regardless of the types of graphs, these distributed algorithms require each agent to communicate with their neighbors at every iteration, which leads to a overhead in restricted environments. Power consumption by communication may become more significant than that by computation of control inputs or optimization algorithms \cite{SHCA}. Recently,  the event-triggering approach has appeared as a promising paradigm to reduce the communication load in distributed systems. In the distributed detection problem over sensor network \cite{AVJ, AMM}, each sensor censors its local data and send the updated data to the fusion center only when the data is informative. For distributed control problems, agents send their coordinate information only when a triggering condition is satisfied \cite{MT, DFJ}.

For the distributed optimization problems, recent works \cite{KHT, LL, MUA, WYLY, ZC} developed distributed optimization algorithms with event-triggered communication to overcome the communication overhead of distributed systems. 
Lu-Li \cite{LL} designed the distributed gradient descent with event-triggered communication
for the distributed optimization on the whole space, and it was further studied in
Li-Mu \cite{LM} to establish a convergence rate.
 For the distributed optimization on bounded
domain, Kajiyama et al \cite{KHT} designed the projected distributed gradient descent with
event-triggered communication. Liu et al \cite{LLSX} extended the work to the case with constant
step-size. Cao-Basar \cite{CB} studied the online distributed problem using the distributed
event-triggered gradient method. 
 In these algorithms, each agent sends its state only when the difference between the current state and the latest sent state is larger than a threshold, 
therefore reducing possible unnecessary network utilization.

The consensus-based distributed optimization algorithms with event-triggering communication mentioned above have been proposed for the undirected graph. In this work, we are interested in 
developing a distributed optimization on directed graph involving the even-triggered communication. Precisely we propose the gradient-push algorithm incorporating the event-triggered communication. 
% The proposed method in this paper is guaranteed to converge and show that the convergence rate is the same as in \cite{Ned Push sum}, even though agents do not send up-to-date information to their neighbors each step. 
In the proposed algorithm, each agent only sends its state information when the differences between the latest sent states and the current states are larger than a triggering thresholds. We prove that the algorithm solves the distributed optimization under suitable decays and summability conditions on the stepsize and the triggering thresholds. The numerical experiments are given for the proposed algorithm, supporting the theoretical results.

The rest of the paper is organized as follows. In Section 2, we state the problem and introduce the algorithm with its convergence results. Section 3 is devoted to provide a consensus estimate, which is essentially used in Section 4 to prove the convergence results. In section 5, we present numerical results of the proposed algorithm.

Before ending this section, we state several notations used in this paper.   For a matrix $A \in \mathbb{R}^{n \times m}$, $a_{ij}$ or $[A]_{ij}$ denotes the $(i,j)$th entry of A. For a vector $x\in\mathbb{R}^d$, $\|x\|=\sqrt{x^Tx}$ denotes the standard Eculidean norm. In addition, for $X \in \mathbb{R}^{m\times d}$ given by $X=[x_1; x_2; \cdots ; x_m]$ with row vector $x_k \in \mathbb{R}^{d}$, we define the mixed norm $\|X\|_1$ by $\|X\|_1 = \sum_{k=1}^m \|x_k\|$ nad the maximum norm $\|X\|_{\infty} = \max_{1\leq k\leq m} \|x_k\|$. Also we use $\bar{x}$ to denote $\bar{x} = \frac{1}{m}\sum^{m}_{k=1} x_k$.

\section{Problem, algorithm, and main results}
\subsection{Problem statement} We consider the distributed optimization problem, which consists of $m$ agents connected by a network that collaboratively minimize a global cost function given by the sum of local private cost functions. Formally, the problem is described by

\begin{equation}\label{problem}
\min_{x\in\mathbb{R}^d}f(x)=\sum^m_{i=1}f_i(x),
\end{equation}  
where $f_i:\mathbb{R}^d\rightarrow\mathbb{R}$  is a local convex cost function only known to agent $i\in \mathcal{V}=\{1,2,\cdots m\}$. We let $f^*$ be the optimal value of problem \eqref{problem} and denote by $X^*$ the set of optimal solutions, i.e., 
$$
X^*=\{x \in \mathbb{R}^d~:~ f(x) = f^*\},
$$
which is assumed to be nonempty.   We make the following standard assumption on the local cost functions.

\begin{ass}\label{GB}
For each $i \in \{1,\cdots, m\}$, there exists $D_i>0$ such that
\begin{equation}\label{eq-2-1}
\|\nabla f_i(x)\| \leq D_i \quad \forall x \in \mathbb{R}^d.
\end{equation}
We set $D = \max_{1\leq i \leq m} D_i$.
\end{ass}

The communication pattern among agents in \eqref{problem} at each time $t \in \mathbb{N}\cup \{0\}$ is characterized by a directed graph $\mathcal{G}(t)=(\mathcal{V},\mathcal{E}(t))$, where each node in $\mathcal{V}$ represents each agent, and each directed edge $(i,j)\in \mathcal{E}(t)$ means that $i$ can send messages to $j$. In this work, we consider a sequence of graphs $\{\mathcal{G}(t)\}_{t\geq0}$ satisfying the following assumption.
\begin{ass}\label{graph}
The sequence of graph $\{\mathcal{G}(t)\}_{t\geq0}$ is uniformly strongly connected, i.e., there exists a value $B \in \mathbb{N}$ such that  the graph with edge set $\cup_{i=kB}^{(k+1)B -1} \mathcal{E}(i)$ is strongly connected for any $k \geq 0$. %The communication graph $\mathcal{G}$ is fixed and strongly connected.
\end{ass}

We define in-neighbors and out-neighbors of node $i$, respectively, as $N_i^{\text{in}}(t)=\{j|(j,i)\in\mathcal{E}(t)\}\cup \{i\}$ and $N_i^{\text{out}}(t)=\{j|(i,j)\in\mathcal{E}(t)\}\cup \{i\}$. Also the out-degree of node $i$ is defined as $d_i^{\text{out}}(t)=|N_i^{\text{out}}(t)|$. Define the matrix A(t) such that
$[A(t)]_{ij} = a_{ij}(t)$, where
$$
a_{ij}(t)=\begin{cases}
    1/d_j^{\text{out}}(t),\ \text{if $j\in N_i^{\text{in}}(t)$},\\
    0, \ \text{otherwise}.
\end{cases}
$$     
The matrix $A(t) \in \mathbb{R}^{m\times m}$  is column stochastic and we recall some useful properties of this matrix from \cite[Corollary 2]{Ned Push sum}.

\begin{lem}[\cite{Ned Push sum}, $Corollary$ 2]\label{lemma2}
Suppose that the graph sequence $\{G(t)\}$ is uniformly strongly connected. Then, the following statements are valid.
\begin{enumerate}
\item For each integer $s\geq 0$, there is a stochastic vector $\phi(s)$ such that for all $i,j$ and $t\geq s$
\begin{equation}\label{MatrixEstimate}
|[A(t:s)]_{ij}-\phi_i(t)| \leq C_0  \lambda^{t-s}
\end{equation}
for some values $C_0 \geq 1$ and $\lambda\in(0,1)$ depending on the graph sequence.
\item The following inequality holds.
\begin{equation}\label{Q}
Q := \inf_{t=0,1,\cdots}{\min_{1\leq i \leq m}{[A(t:0)\mathbf{1}]_i}} \geq 1/n^{nB}.
\end{equation}
\end{enumerate}
 Here we denote by $A(t:s)$ the matrix given as
\begin{equation*}
A(t:s) = A(t) A(t-s) \cdots A(s) \quad \textrm{for all}~ t \geq s \geq 0.
\end{equation*}
\end{lem}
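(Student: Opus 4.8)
The plan is to reduce both statements to a single combinatorial estimate on the graph sequence and then run two short arguments on top of it. Begin with the elementary structure of the weights: each $A(t)$ is column stochastic with nonzero entries lying in $[1/m,1]$ (since $d_j^{\mathrm{out}}(t)\le m$), and its diagonal entries are \emph{always} positive, $[A(t)]_{ii}\ge 1/m$, because $i\in N_i^{\mathrm{in}}(t)$ by construction. Consequently, in a product $A(r{+}L{-}1:r)=A(r{+}L{-}1)\cdots A(r)$ the entry $[A(r{+}L{-}1:r)]_{ij}$ is the sum, over time-respecting walks $j=v_0,v_1,\dots,v_L=i$ with $v_{\ell+1}\in N_{v_\ell}^{\mathrm{out}}(r{+}\ell)$, of the products of the associated weights, each factor being $\ge 1/m$; in particular, ``waiting'' at a node is always an admissible step.

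The crux is a uniform reachability estimate: for every $r\ge0$ and every pair $i,j$ there is a time-respecting walk from $j$ to $i$ inside the window $[r,r{+}mB)$, and hence
\[
[A(r{+}mB{-}1:r)]_{ij}\ \ge\ (1/m)^{mB}\qquad\text{for all }i,j\text{ and all }r\ge0.
\]
I would prove the reachability claim by an expanding-set induction. Fix the source $j$ and let $R_k$ be the set of nodes reachable from $j$ by a time-respecting walk confined to $[r,r{+}kB)$; since self-loops are always available, $R_k$ is closed under staying put, and by Assumption~\ref{graph} the union of the edge sets over the next block $[r{+}kB,r{+}(k{+}1)B)$ is strongly connected, which forces an edge from $R_k$ into $\mathcal V\setminus R_k$ to appear somewhere in that block whenever $R_k\ne\mathcal V$: waiting at the tail of that edge and then traversing it enlarges $R_k$. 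Thus $|R_{k+1}|\ge|R_k|+1$ until $R_k=\mathcal V$, so $R_{m-1}=\mathcal V$; padding the resulting walk (of length $\le(m{-}1)B$) with self-loops up to length exactly $mB$ and using the first paragraph yields the displayed bound. This is the one genuinely delicate step, precisely because a time-respecting walk must use edges in the correct temporal order, so one cannot argue inside a single static union graph, and one has to track the self-loop padding to pin the exponent down.

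For part 2, set $\rho(t):=A(t:0)\mathbf 1$, the vector of row sums of $A(t:0)$. Column stochasticity gives $\mathbf 1^{\top}\rho(t)=m$, so $\max_i\rho_i(t)\ge1$ for every $t$, while $\rho(t)=A(t)\rho(t{-}1)$ together with $[A(t)]_{ii}\ge1/m$ gives $\rho_i(t)\ge(1/m)\rho_i(t{-}1)$. For $t\ge mB$, factor $A(t:0)=A(t:t{-}mB{+}1)A(t{-}mB:0)$, choose $j^{\star}\in\arg\max_j\rho_j(t{-}mB)$, and bound $\rho_i(t)\ge[A(t:t{-}mB{+}1)]_{ij^{\star}}\,\rho_{j^{\star}}(t{-}mB)\ge(1/m)^{mB}$ using the reachability estimate and $\rho_{j^{\star}}(t{-}mB)\ge1$. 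For $0\le t<mB$, iterating the diagonal bound gives $\rho_i(t)\ge(1/m)^t\rho_i(0)\ge(1/m)^{t+1}\ge(1/m)^{mB}$. Combining the two cases yields $Q\ge1/m^{mB}$, which is \eqref{Q}.

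For part 1, put $\delta_0:=(1/m)^{mB}$ and note that the reachability estimate says every \emph{row} of $A(r{+}mB{-}1:r)$ is entrywise $\ge\delta_0$, i.e.\ every \emph{column} of its transpose is $\ge\delta_0$. Since $A(t:s)^{\top}=A(s)^{\top}A(s{+}1)^{\top}\cdots A(t)^{\top}$ is a forward product of row-stochastic matrices whose consecutive blocks of $mB$ factors are exactly such transposes, the standard ergodicity-coefficient argument applies: the $\ell_1$-type coefficient $\tau(\cdot)$ is submultiplicative under matrix products and satisfies $\tau(P)\le1-\delta_0$ for any row-stochastic $P$ with a column $\ge\delta_0$, so chopping $[s,t]$ into blocks of length $mB$ gives $\tau(A(t:s)^{\top})\le C_0\lambda^{t-s}$ with $\lambda:=(1-\delta_0)^{1/(mB)}\in(0,1)$ and $C_0:=(1-\delta_0)^{-1}\ge1$ (the incomplete block costing only this constant). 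Unwinding the definition of $\tau$, all columns of $A(t:s)$ lie within $\ell_1$-distance $2C_0\lambda^{t-s}$ of one another. Finally, take $\phi(t)$ to be a fixed column, say the first, of $A(t:0)$ — a stochastic vector; for $s\ge1$ the identity $A(t:0)=A(t:s)A(s{-}1:0)$ exhibits $\phi(t)$ as a convex combination of the columns of $A(t:s)$, and for $s=0$ the bound is the ergodicity estimate applied to $A(t:0)^{\top}$ directly. In all cases $|[A(t:s)]_{ij}-\phi_i(t)|\le2C_0\lambda^{t-s}$ for $t\ge s\ge0$, which is \eqref{MatrixEstimate} after renaming $2C_0$ as $C_0$.
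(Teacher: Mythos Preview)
The paper does not prove this lemma; it is quoted as Corollary~2 of Nedi\'c--Olshevsky \cite{Ned Push sum} and used as a black box, so there is no in-paper argument to compare against. Your approach is the standard one underlying that reference: a uniform positive lower bound on all entries of $mB$-step products via time-respecting reachability, then a Dobrushin ergodicity-coefficient contraction on the transposed (row-stochastic) products for part~1 and a row-sum argument for part~2.

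There is one genuine slip in the reachability step. Assumption~\ref{graph} only guarantees strong connectivity of the edge union over \emph{aligned} blocks $[kB,(k+1)B)$, $k\ge0$; it says nothing about the shifted blocks $[r+kB,r+(k+1)B)$ that you invoke when you write ``by Assumption~\ref{graph} the union of the edge sets over the next block $[r+kB,r+(k+1)B)$ is strongly connected.'' Since both your use of $A(t:t-mB+1)$ in part~2 and your block-chopping of $[s,t]$ in part~1 require the entrywise lower bound on $A(r+mB-1:r)$ for \emph{arbitrary} $r$, this matters. The repair is short and does not change your constants: any window $[r,r+mB)$ of length $mB$ contains at least $m-1$ complete aligned $B$-blocks (write $r=qB+r'$ with $0\le r'<B$; the fully contained aligned blocks are $[(q+1)B,(q+2)B),\dots,[(q+m-1)B,(q+m)B)$ when $r'>0$, and one more when $r'=0$). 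Running your expanding-set argument on those $m-1$ aligned blocks already grows $\{j\}$ to $\mathcal V$, and self-loop padding in the (at most) two partial end-blocks brings the walk length to exactly $mB$, yielding $[A(r+mB-1:r)]_{ij}\ge(1/m)^{mB}$ for all $r\ge0$ as you claimed. With this correction in place, the rest of your argument (the submultiplicativity of the ergodicity coefficient, the choice $\phi(t)$ as a column of $A(t:0)$ together with $A(t:0)=A(t:s)A(s-1:0)$, and the row-sum bound for $Q$) is correct.
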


%\begin{algorithm}  
%  \caption{Deployment Strategy}
%  \begin{algorithmic}[1]
%    \REQUIRE Initialize $x_i(0)$ arbitraily, $y_i(0)=1$ for all $i\in\{1,\cdots,m\}$. Set $\hat{x}_i(0)=x_i(0)$
%    \begin{tabular}[t]{ @{}l@{} }
%      $AP1 = (x,y)$ \\
%      Total $APs = T_{AP}$
%    \end{tabular}
%    \ENSURE All candidate positions $C\{1:\mbox{End}\}$
%  \end{algorithmic}
%\end{algorithm}
 
     \begin{algorithm}
    \begin{algorithmic}[1] 
    \caption{Distributed Event-Triggered gradient-push algorithm}\label{algo}
    \REQUIRE Initialize $x_i(0)$ arbitraily, $y_i(0)=1$ for all $i\in\{1,\cdots,m\}$. Set $\hat{x}_i(0)=x_i(0)$
            \FOR{$t=0,1,\cdots$,}
                \STATE Compute the new action as  
                 \begin{align}
            \widehat{w}_i(t+1) &= \sum^m_{j=1} a_{ij}(t) \hat{x}_j(t), \\
             y_i(t+1) &= \sum^m_{j=1} a_{ij}(t) \hat{y}_j(t),\label{hatyy} &\\
            \widehat{z}_i(t+1)&=\frac{\widehat{w}_i(t+1)}{y_i(t+1)}, \label{hatzz}
            \end{align}
            \begin{equation}\label{update}
            x_i(t+1) = \widehat{w}_i(t+1) - \alpha(t+1) \nabla f_i(\widehat{z}_i(t+1)).
            \end{equation} 
            \IF{$\|x_i(t+1)-\hat{x}_i(t)\|\geq \tau(t+1)$}
                \STATE Send $x_i(t+1)$ to neighbors $\mathcal{N}_i$. Set $\hat{x}_i(t+1) = x_i(t+1)$
                \ELSE
                \STATE Do not send and set $\hat{x}_i(t+1) = \hat{x}_i(t)$
                \ENDIF
                \IF{$|y_i(t+1)-\hat{y}_i(t)|\geq \zeta(t+1)$}
                \STATE Send $y_i(t+1)$ to neighbors $\mathcal{N}_i$. Set $\hat{y}_i(t+1) = y_i(t+1)$
                \ELSE
                \STATE Do not send and set $\hat{y}_i(t+1) = \hat{y}_i(t)$
                \ENDIF
            \ENDFOR
    \end{algorithmic}
    \end{algorithm}

In addition, we consider the following assumptions on the stepsize and the thresholds for the trigger conditions.
\begin{ass}\label{stepsize}
The sequence of stepsize $\{\alpha(t+1)\}_{t\geq 0}$ is monotonically non-increasing and satisfies
$$
\sum^\infty_{t=0}\alpha(t+1)=\infty, \ \sum^\infty_{t=0}\alpha(t+1)^2<\infty. 
$$  
\end{ass}
\begin{ass}\label{event-triggered}
The sequence of event-triggering thresholds $\{\tau(t)\}_{t\geq 0}$ is monotonically non-increasing and satisfies
$$
\sum^\infty_{t=1}\tau(t)<\infty.
$$   
\end{ass}
\begin{ass}\label{y-trigger}
The sequence of event-triggering thresholds $\{\zeta(t)\}_{t\geq 0}$ is monotonically non-increasing and satisfies
\begin{equation*}
\sum^\infty_{t=1} t^{3/2}\zeta(t) < \infty, \quad \sum^\infty_{t=1} \zeta(t) <1.
\end{equation*}
\end{ass}
Note that $\sum^\infty_{t=1} t^{3/2}\zeta(t) < \infty$ implies that there exists a finite $M$ such that $\sum^\infty_{t=0} \zeta(t) = M$. If we set a new sequence $\{\tilde{\zeta}(t)\}_{t\in \mathbb{N}}$ by $\tilde{\zeta}(t) = \zeta(t)/(M+1)$, then it satisfy $\sum^\infty_{t=1} \tilde{\zeta}(t)<1$. Hence if we have a sequence $\{\zeta(t)\}_{t\in \mathbb{N}}$ satisfying $\sum^\infty_{t=1} t^{3/2}\zeta(t) < \infty$, then we may divide the sequence by a positive constant to satisfy Assumption \ref{y-trigger}. One example of the sequence that satisfies Assumption \ref{y-trigger} is $\zeta(t) = \frac{1}{3t^3}$.
For $T \geq 0$ we set the following variables 
\begin{equation}\label{eq-2-8}
E_{\tau}(T) = \sum_{t=1}^T \tau (t), \quad E_{\tau,2} (T) = \sum_{t=1}^T \tau(t)^2, \quad E_{\tau}= \sum_{t=1}^{\infty} \tau (t)
\end{equation}
and
\begin{equation}\label{eq-2-8}
F_{\zeta}(T) = \sum_{t=1}^T \zeta (t), \quad F_{\zeta}= \sum_{t=1}^{\infty} \zeta (t), \quad  F_{\zeta_{3/2}} = \sum_{t=1}^\infty t^{3/2}\zeta(t).
\end{equation}
These variables will appear in the statements and proofs of  the convergence result for Algorithm 1.
\
%Before stating our main results, Theorem \ref{maintheorem} and \ref{case2_main}, we need to obtain a positive uniform lower bound of $y(t)$ which is a key point of the proofs. Let
%\begin{equation}\label{delta}
%\delta:= \min_{1\leq i\leq m }\inf_{t\in\mathbb{N}} y_i(t).
%\end{equation}
%By \eqref {hatyy} in Algorithm \ref{algo}, we may write
%\begin{equation}\label{yyyy}
%y(t) = A(t-1)\cdots A(0)\mathbf{1}_m + \sum^{t-1}_{s=1}A(t-1)\cdots A(s)\theta(s),
%\end{equation}
%where $\theta(s) = \hat{y}(s) - y(s)$. If $\zeta(t)=0$ which means all in-neighbors of agent $i$ share the $y_i(t)$ with this agent for every time step, we know
%\begin{equation}\label{yyy}
%y(t) = A(t-1)\cdots A(0)\mathbf{1}_m.
%\end{equation}
%Hence a uniform lower bound is directly obtained by \eqref{Q}. In this case $\delta = Q >0$. But when $\zeta(t)\neq 0$, a positive uniform lower bound $\delta$ may not exist due to the term $\sum^{t-1}_{s=1}A(t-1)\cdots A(s)\theta(s)$. However, the following lemma guarantees a uniform lower bound where the proof is given in Section \ref{sec3}.
Before finishing this subsection, we define the following  constant
\begin{equation}\label{delta}
\delta:= \min_{1\leq i\leq m }\inf_{t\in\mathbb{N}} y_i(t).
\end{equation}
whose positivity is proved in Lemma \ref{lowerbound} under the Assumption \ref{y-trigger}.

\subsection{Main results}
Our first result establishes the convergence of $\hat{z}_i(t)$ to the optimal solutions for an arbitrary stepsize $\alpha(t)$ satisfying Assumption \ref{stepsize}, and event-triggering thresholds $\tau(t)$ and $\zeta(t)$ satisfying Assumption \ref{event-triggered} and \ref{y-trigger}. 
\begin{thm}\label{maintheorem}
Suppose that Assumptions \ref{GB} - \ref{y-trigger} hold. Then the sequence $\{\hat{z}_i (t)\}_{t \in \mathbb{N}}$ for $1\leq i \leq n$ of the Algorithm \ref{algo} satisfies the following property:
$$
\lim_{t\rightarrow \infty} \hat{z}_i(t) = x^* \ \text{for all $i$ and for some $x^*\in X^*$}.
$$
\end{thm}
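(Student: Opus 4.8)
The plan is to reduce the theorem to two ingredients: a \emph{consensus estimate} showing that each $\hat z_i(t)$ stays close to a common running average, and a \emph{perturbed descent estimate} for that average. I take as given the triggering bounds $\|\hat x_i(t)-x_i(t)\|\le\tau(t)$ and $|\hat y_i(t)-y_i(t)|\le\zeta(t)$ (immediate from the trigger conditions of Algorithm \ref{algo}) and the positivity $\delta=\min_i\inf_t y_i(t)>0$ of Lemma \ref{lowerbound} (the first place Assumption \ref{y-trigger} is used). Expressing $\widehat w_i(t)$ and $y_i(t)$ through the products $A(t{:}s)$ and using column stochasticity of each $A(t)$, the quantities $\sum_i\widehat w_i(t)$ and $\sum_i y_i(t)$ change only through the triggering errors; in particular $\bar y(t):=\frac1m\sum_i y_i(t)$ obeys $|\bar y(t)-1|\le F_\zeta<1$ and converges to some $\bar y_\infty\ge 1-F_\zeta>0$. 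With $\bar w(t):=\frac1m\sum_i\widehat w_i(t)$, set the running average $v(t):=\bar w(t)/\bar y(t)$. Using $x_i(t{+}1)=\widehat w_i(t{+}1)-\alpha(t{+}1)\nabla f_i(\hat z_i(t{+}1))$, a short computation gives the perturbed subgradient recursion
\begin{equation*}
v(t+1)=v(t)-\frac{\alpha(t)}{m\,\bar y(t+1)}\sum_{i=1}^m\nabla f_i(\hat z_i(t))+r^\tau(t)+r^\zeta(t),
\end{equation*}
in which $\|r^\tau(t)\|\le C\tau(t)$ comes from the $x$-triggering errors and $\|r^\zeta(t)\|\le C\|v(t)\|\,\zeta(t)$ from the drift of $\bar y$, while $\|v(t)\|\le C\big(1+\sum_{r\le t}\alpha(r)\big)=O(\sqrt t)$ by Cauchy--Schwarz and Assumption \ref{stepsize}.

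The technical core is the consensus estimate of Section~3. Applying Lemma \ref{lemma2} to each source term $A(t{:}s)\xi$ in the unrolled expressions for $\widehat w_i(t)$ and $y_i(t)$ --- for which $\big|[A(t{:}s)\xi]_i-\phi_i(t)\sum_k[A(t{:}s)\xi]_k\big|\le C_0\lambda^{t-s}\|\xi\|_1$ --- yields $\|\widehat w_i(t)-m\phi_i(t{-}1)\bar w(t)\|\le R^w(t)$ and $|y_i(t)-m\phi_i(t{-}1)\bar y(t)|\le R^y(t)$ with
\begin{equation*}
R^w(t)=C_0\lambda^{t-1}\|X(0)\|_1+C\sum_{s<t}\lambda^{t-1-s}\big(\tau(s)+\alpha(s)\big),\qquad R^y(t)=C\lambda^{t-1}+C\sum_{s<t}\lambda^{t-1-s}\zeta(s).
\end{equation*}
Dividing and using $y_i(t)\ge\delta$, $\bar y(t)\ge1-F_\zeta$,
\begin{equation*}
\|\hat z_i(t)-v(t)\|=\Big\|\frac{\widehat w_i(t)}{y_i(t)}-\frac{\bar w(t)}{\bar y(t)}\Big\|\le\frac{(1+F_\zeta)R^w(t)+\|\bar w(t)\|R^y(t)}{\delta(1-F_\zeta)}.
\end{equation*}
Since $\tau,\alpha,\zeta\to0$ the right-hand side tends to $0$; and since $\|\bar w(t)\|=O(\sqrt t)$ and $\sum_s\sqrt s\,\zeta(s)<\infty$ (a consequence of Assumption \ref{y-trigger}), estimating the geometric convolutions (using monotonicity of $\alpha$ and splitting at $s=t/2$) also gives the weighted summability $\sum_t\alpha(t)\|\hat z_i(t)-v(t)\|<\infty$ for each $i$ --- which, rather than mere decay, is what the descent step requires.

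To conclude, fix $x^*\in X^*$ and expand $\|v(t+1)-x^*\|^2$ with the recursion above. By convexity of $f_i$ and $\|\nabla f_i\|\le D$, $\sum_i\langle\nabla f_i(\hat z_i(t)),v(t)-x^*\rangle\ge f(v(t))-f^*-2D\sum_i\|\hat z_i(t)-v(t)\|$; the inner products against $r^\tau(t),r^\zeta(t)$ are absorbed via $2\|v(t)-x^*\|\beta_t\le\beta_t\|v(t)-x^*\|^2+\beta_t$ with $\beta_t\in\{C\tau(t),C\zeta(t)\}$, and all squared terms are $O(\alpha(t)^2+\tau(t)^2+\zeta(t)^2)$. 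This produces
\begin{equation*}
\|v(t+1)-x^*\|^2\le(1+\beta_t)\|v(t)-x^*\|^2-\frac{2\alpha(t)}{m\,\bar y(t+1)}\big(f(v(t))-f^*\big)+d_t
\end{equation*}
with $\sum_t\beta_t<\infty$ and $\sum_t d_t<\infty$ (using $\sum\alpha^2<\infty$, $\sum\tau<\infty$, $\sum\zeta<\infty$ and the consensus summability just obtained). By the Robbins--Siegmund lemma, $\|v(t)-x^*\|$ converges for every $x^*\in X^*$, so $\{v(t)\}$ is bounded, and $\sum_t\alpha(t)\big(f(v(t))-f^*\big)<\infty$, whence $\liminf_t f(v(t))=f^*$. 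Choosing a subsequence $v(t_k)\to\bar x$ with $f(v(t_k))\to f^*$ and using continuity of $f$ gives $\bar x\in X^*$; with $x^*=\bar x$, the convergent sequence $\|v(t)-\bar x\|$ has $0$ as a subsequential limit, so $v(t)\to\bar x$, and then $\hat z_i(t)=v(t)+(\hat z_i(t)-v(t))\to\bar x$ by the consensus estimate. This is the claim with $x^*=\bar x$.

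The step I expect to be hardest is the consensus estimate. Event-triggering destroys the conservation of the column sums of the $x$- and $y$-iterates, the denominators $y_i(t)$ stay away from $0$ only through the nontrivial Lemma \ref{lowerbound}, and $\hat z_i(t)$ must be tracked against a \emph{moving} center $\bar w(t)/\bar y(t)$; on top of that one needs the $\alpha$-weighted \emph{summable} control of $\|\hat z_i(t)-v(t)\|$, not just its decay, to run Robbins--Siegmund. The asymmetry that $\zeta$ enters only the $y$-dynamics --- where in the ratio $\widehat w_i(t)/y_i(t)$ it is multiplied by the $O(\sqrt t)$-sized numerator $\bar w(t)$ --- is precisely what makes the reinforced weighted-summability condition $\sum_t t^{3/2}\zeta(t)<\infty$ of Assumption \ref{y-trigger} the natural requirement.
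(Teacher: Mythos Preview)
Your proposal is correct and follows the same two–step architecture as the paper: a consensus estimate based on Lemma~\ref{lemma2} and the positivity Lemma~\ref{lowerbound}, feeding a perturbed descent inequality that is closed by the Robbins--Siegmund lemma (the paper's Lemma~4.2). The only substantive difference is the choice of ``center''. The paper tracks $B_\zeta\bar x(t)=\tfrac{1}{m_\zeta}\sum_i x_i(t)$ with the \emph{fixed} normalizer $m_\zeta=\lim_t\sum_i y_i(t)$; consequently the entire effect of the $y$-triggering is packed into the consensus bound through $K(t)=\beta(t)/m_\zeta$ (Lemma~3.1, Proposition~3.3), and the descent inequality (Corollary~4.3) has no $\zeta$-drift term. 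You instead track $v(t)=\bar w(t)/\bar y(t)$ with the \emph{time-varying} normalizer $\bar y(t)$, which removes the tail term $F_\zeta-F_\zeta(t)$ from your $R^y$ but pushes a multiplicative drift $r^\zeta(t)=O(\zeta(t))\,v(t)$ into the Lyapunov recursion; you then have to bound $\|v(t)\|=O(\sqrt t)$ and check $\sum_t t\,\zeta(t)^2<\infty$ for the squared remainder. Both bookkeepings are valid and lead to the same weighted summability requirement $\sum_t\alpha(t)\|\hat z_i(t)-\text{center}\|<\infty$, which in either route is exactly where Assumption~\ref{y-trigger} (the reinforced condition $\sum_t t^{3/2}\zeta(t)<\infty$) is used. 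Your variant is arguably a little cleaner in that it never needs the asymptotic constant $m_\zeta$ or the separate Lemma~3.1; the paper's variant keeps the descent inequality free of $\zeta$, at the price of a heavier consensus lemma.
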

Next we consider the Algorithm \ref{algo} with specific stepsize $\alpha(t)=1/\sqrt{t}$. This stepsize does not satisfy Assumption \ref{stepsize}, but we may obtain an explicit convergence rate as in the following result. Before stating the result, we give some notations which are used throughout the paper. We define the constants $m_{\zeta}$ and $B_{\zeta}$ by
\begin{equation}\label{mzeta}
 m_{\zeta} = 1_m^T y(0) + \sum_{s=1}^{\infty} 1_m^T \theta (s) = m+ \sum_{s=1}^{\infty} 1_m^T \theta (s)
 \end{equation}
 and
 \begin{equation}\label{bzeta}
 B_{\zeta} = \frac{m_{\zeta}}{m}.
 \end{equation}
These constants are well-defined if $\{\zeta(s)\}_{s \geq 0}$ is summable since we have the inequality $|1_m^T \theta (s)| \leq m \zeta (s)$ from the triggering condition.
%Our second result gives a convergence rate of the objective function to its optimal value for a specific stepsize $\alpha(t)=1/\sqrt{t}$ which  does not satisfy Assumption \ref{stepsize}.  %Usually, as a decay rate of stepsize is faster, a convergence rate is slower.
\begin{thm}\label{case2_main}
Suppose that Assumption \ref{GB}, \ref{graph}, \ref{event-triggered} and \ref{y-trigger} hold. Let $\alpha(t)=\frac{1}{\sqrt{t}}$ for $t \geq 0$. Define $H(-1)=1$ and $H(t) : =\prod_{k=0}^t (1+\tau(k))$. Moreover, suppose that every node $i$ maintains the variable $\tilde{z}_i(t)\in \mathbb{R}^d$ initialized at time $t=0$ with $\tilde{z}_i(0)\in\mathbb{R}^d$ and updated by
$$
\tilde{z}_i(t+1) = \frac{\frac{\alpha(t+1)}{H(t)}\hat{z}_i(t+1)+S(t)\tilde{z}_i(t)}{S(t+1)},
$$
where $S(0)=0$ and $S(t)=\sum^{t-1}_{k=0}\frac{\alpha(k+1)}{H(k)}$ for $t\geq 1$. Then we have for each $T\geq 0$ and $i=1,\cdots,m$, the following estimate 
\begin{equation*}
\begin{split}
&f(\tilde{z}_i(T+1))-f(x^*) 
\\
&{\leq \frac{me^{E_{\tau}}}{2\sqrt{T+1}}J_1 (T) + \frac{3mDe^{E_{\tau}}}{\delta \sqrt{T+1}} J_2 (T) + \frac{3mDe^{E_\tau}}{ \delta\sqrt{T+1}} J_3 (T),}
\end{split}
\end{equation*}
where
\begin{align*}
&\medmath{ J_1 (T)= \frac{\|\bar{x}(0)-x\|^2}{B_{\zeta}}+\bigg[2D^2 \Big(1+ \ln{(T+1)}\Big)+2E_{\tau,2}(T)+E_{\tau}(T)\Big)\bigg]B_{\zeta}}
\\
&J_2 (T)= \medmath{\bigg(\frac{C_0 }{ (1-\lambda)} \bigg)\|x(0)\|_1  + \frac{4mC_0  E_{\tau}(T)}{ (1-\lambda)}  +\bigg(  \frac{C_0mD}{ (1-\lambda)}\bigg)(1+\ln(T))}
\\
&J_3 (T)=\medmath{ \sum_{t=0}^T K(t) \alpha (t+1) \Big[ \|x(0)\|_1 + \sum_{s=0}^{t-1} (\alpha (s+1) D + \tau (s))\Big],}
\end{align*}
and $x^*\in X^*$. In addition, the right hand side is bounded by $O(\log (T+1)/ \sqrt{T+1})$. Here $K(t) = \beta(t)/m_{\zeta}$ denotes a bound for asymptotic behavior of $y_i (t)$ stated in \mbox{Lemma \ref{lem-3-4}.}
\end{thm}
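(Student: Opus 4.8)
The plan is to run the standard gradient-push descent analysis on the running sum $\sum_i x_i(t)$, carefully tracking the additional perturbations produced by the event-triggered communication (which is exactly the role of the factor $H(t)$), and then to pass from a bound on $f(\bar x(t+1))-f^*$ to the claimed bound on $f(\tilde z_i(T+1))-f^*$ by convexity and the definition of the weighted average $\tilde z_i$. First I would derive the average dynamics: summing \eqref{update} over $i$ and using that $A(t)$ is column stochastic gives $\sum_i x_i(t+1)=\sum_j\hat x_j(t)-\alpha(t+1)\sum_i\nabla f_i(\hat z_i(t+1))$. Since the triggering rule forces $\|\hat x_j(t)-x_j(t)\|\le\tau(t)$ for every $j$ and $t$ (with the difference equal to $0$ at $t=0$), writing $e_j(t):=\hat x_j(t)-x_j(t)$ we obtain, with $\bar x(t)=\tfrac1m\sum_j x_j(t)$,
\[
\bar x(t+1)=\bar x(t)+\tfrac1m\textstyle\sum_j e_j(t)-\tfrac{\alpha(t+1)}{m}\sum_i\nabla f_i(\hat z_i(t+1)),\qquad\|e_j(t)\|\le\tau(t).
\]

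Next I would establish a perturbed descent inequality by expanding $\|\bar x(t+1)-x^*\|^2$. The cross term with $\tfrac1m\sum_j e_j(t)$ is bounded by $\tau(t)\|\bar x(t)-x^*\|^2+\tau(t)$ via Young's inequality — this is precisely what produces the multiplicative factor $1+\tau(t)$ and hence $H(t)=\prod_{k\le t}(1+\tau(k))$; the quadratic remainder is $\le 2\tau(t)^2+2\alpha(t+1)^2D^2$; and for the gradient cross term I would split $\bar x(t)-x^*=(\bar x(t)-\hat z_i(t+1))+(\hat z_i(t+1)-x^*)$, apply convexity of $f_i$ to the second summand and Assumption \ref{GB} to the first, together with $f(\bar x(t+1))\le\sum_i f_i(\hat z_i(t+1))+D\sum_i\|\bar x(t+1)-\hat z_i(t+1)\|$, to reach
\[
\|\bar x(t+1)-x^*\|^2\le(1+\tau(t))\|\bar x(t)-x^*\|^2-\tfrac{2\alpha(t+1)}{m}\big(f(\bar x(t+1))-f^*\big)+R(t),
\]
where $R(t)$ collects the terms $\tau(t)$, $\tau(t)^2$, $\alpha(t+1)^2D^2$ and $\alpha(t+1)D\sum_i\|\bar x(t+1)-\hat z_i(t+1)\|$ (after absorbing $\|\bar x(t)-\hat z_i(t+1)\|$ into $\|\bar x(t+1)-\hat z_i(t+1)\|+\tau(t)+\alpha(t+1)D$ using the average dynamics).

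Then I would divide by $H(t)$, use $H(t)=(1+\tau(t))H(t-1)$ and $H(-1)=1$ to telescope from $t=0$ to $T$, and drop the nonnegative term $\|\bar x(T+1)-x^*\|^2/H(T)$, obtaining
\[
\tfrac2m\sum_{t=0}^T\tfrac{\alpha(t+1)}{H(t)}\big(f(\bar x(t+1))-f^*\big)\le\|\bar x(0)-x^*\|^2+\sum_{t=0}^T\tfrac{R(t)}{H(t)}.
\]
Since $1\le H(t)\le e^{E_\tau}$, the deterministic part of $\sum R(t)/H(t)$ is controlled by $E_\tau(T)$, $E_{\tau,2}(T)$ and $\sum_{t=0}^T\alpha(t+1)^2=\sum_{k=1}^{T+1}1/k\le 1+\ln(T+1)$, which reproduces the $J_1$-type bracket; for the consensus sums $\sum_t\alpha(t+1)\sum_i\|\bar x(t+1)-\hat z_i(t+1)\|$ I would insert the consensus estimate of Section 3, splitting it into the geometric push-sum part (whose $\tfrac1{\sqrt{t+1}}$-weighted sum yields $J_2(T)$ through the $C_0/(1-\lambda)$ bounds of Lemma \ref{lemma2} and the sums $\sum 1/\sqrt t$, $\sum\ln t/\sqrt t$) and the $y$-denominator part, which Lemma \ref{lem-3-4} bounds by $K(t)\big[\|x(0)\|_1+\sum_{s<t}(\alpha(s+1)D+\tau(s))\big]$ and whose weighted sum is exactly $J_3(T)$. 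Finally, convexity of $f$ gives $f(\tilde z_i(T+1))\le S(T+1)^{-1}\sum_{t=0}^T\tfrac{\alpha(t+1)}{H(t)}f(\hat z_i(t+1))$, one more use of Assumption \ref{GB} gives $f(\hat z_i(t+1))-f(\bar x(t+1))\le mD\|\hat z_i(t+1)-\bar x(t+1)\|$, and $S(T+1)\ge H(T)^{-1}\sum_{t=0}^T\alpha(t+1)$ is bounded below by a multiple of $e^{-E_\tau}\sqrt{T+1}$; dividing the displayed inequality by $S(T+1)$ then yields the stated bound, with the normalization $B_\zeta=m_\zeta/m$ entering $J_1$ because $\hat z_i$ consensus-tracks $\bar x$ rescaled by the effective total $y$-mass $m_\zeta$, and with the $\tfrac1\delta$ and the factor $3$ in front of $J_2,J_3$ coming from $1/y_i(t)\le1/\delta$ (Lemma \ref{lem-3-4}, with $\delta$ positive by the positivity result quoted after \eqref{delta}) and the repeated appearances of the consensus term.

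The conceptual content — the $H(t)$-perturbed descent and the convexity averaging — is routine once the consensus estimate of Section 3 is in hand; the hard part will be the bookkeeping in the last step: matching the $\tfrac1{\sqrt{t+1}}$-weighted, $H(t)$-normalized sums of the triggering and stepsize errors and of the consensus estimate (in particular the $K(t)$-weighted sum and the $y$-drift constants $m_\zeta$, $B_\zeta$, $\delta$) to the precise forms of $J_1$, $J_2$, $J_3$, and keeping the $H$ and $S$ normalizations consistent. The final $O(\log(T+1)/\sqrt{T+1})$ claim then follows because each $J_k(T)$ grows at most logarithmically in $T$.
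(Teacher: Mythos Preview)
Your overall architecture---perturbed descent on the running average, division by $H(t)$ to telescope, then convexity averaging with $S(T+1)\ge e^{-E_\tau}\sqrt{T+1}$---matches the paper's proof (Lemma~\ref{submain} $\to$ Lemma~\ref{case2_submain} $\to$ final step). The consensus input from Section~\ref{sec3}, the split into a geometric $C_0\lambda^{t-s}$ piece (your $J_2$) and a $K(t)$-weighted piece (your $J_3$), and the asymptotic claim via Lemma~\ref{lem-4-4} are all exactly as in the paper.

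There is, however, a real gap in how you handle the $B_\zeta$ scaling. The consensus estimate available from Section~\ref{sec3} (Proposition~\ref{mean}) bounds $\|\hat z_i(t+1)-B_\zeta\bar x(t)\|$, \emph{not} $\|\hat z_i(t+1)-\bar x(t)\|$ or $\|\hat z_i(t+1)-\bar x(t+1)\|$. These differ by $|B_\zeta-1|\,\|\bar x(t)\|$, and you have no a~priori control on $\|\bar x(t)\|$. Consequently, if you expand $\|\bar x(t+1)-x^*\|^2$ and split $\bar x(t)-x^*=(\bar x(t)-\hat z_i(t+1))+(\hat z_i(t+1)-x^*)$ as you propose, the residual $D\|\bar x(t)-\hat z_i(t+1)\|$ is \emph{not} dominated by the Section~\ref{sec3} bound, and the argument does not close. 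Your closing remark that ``$B_\zeta$ enters $J_1$'' at the end glosses over this: the $B_\zeta$ must be present in the descent inequality itself, not inserted afterwards.

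The paper's fix is to run the entire descent analysis on $\|B_\zeta\bar x(t)-x\|^2$ from the start (Lemma~\ref{submain}). One writes $(B_\zeta\bar x(t)-x)\cdot\nabla f_i(\hat z_i(t+1))=\tfrac{1}{\alpha(t+1)}(B_\zeta\bar x(t)-x)\cdot(w_i(t+1)-x_i(t+1)+e_i(t+1))$ via \eqref{PEalgorithm}, sums over $i$ using column-stochasticity to produce $\tfrac{m}{\alpha(t+1)}(B_\zeta\bar x(t)-x)\cdot(\bar x(t)-\bar x(t+1))$, and then the identity $B_\zeta\bar x(t)-x=(B_\zeta\bar x(t)-\hat z_i(t+1))+(\hat z_i(t+1)-x)$ yields precisely the consensus term that Proposition~\ref{mean} controls. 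The extra $B_\zeta$ in the quadratic expansion is what generates the exact $1/B_\zeta$ and $B_\zeta$ factors appearing in $J_1(T)$. A secondary simplification: the paper evaluates $f$ at $B_\zeta\bar x(t)$ (the consensus target itself), not at $\bar x(t+1)$, so there is no need for your ``absorbing $\|\bar x(t)-\hat z_i(t+1)\|$ into $\|\bar x(t+1)-\hat z_i(t+1)\|+\cdots$'' step.
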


    \section{Properties of the sequence $\{y_i(t)\}_{t\in\mathbb{N}}$ and Disagreement in Agent Estimates}\label{sec3}
\subsection{Properties of the sequence $\{y_i(t)\}_{t\in\mathbb{N}}$}
A convergence property of $\{y_i(t)\}_{t\in\mathbb{N}}$ and their positive uniform lower bound are key points in proving our main results. Let us first look at the case without event-triggering $(\zeta(t)=0)$, which means all in-neighbors of agent $i$ share the $y_i(t)$ with this agent for every time step. In this case, since $y(t)=\hat{y}(t)$ for all $t\in \mathbb{N}$, it holds that
\begin{equation}\label{yyy}
y(t) = A(t-1:0)\mathbf{1}_m
\end{equation}
by \eqref {hatyy} in Algorithm \ref{algo}. Hence we can directly show that $y(t)$ converges to $\phi(t)$ and  has a uniform lower bound $Q$  using Lemma \ref{lemma2}.
In the event-triggered case, $y(t)$ can be written as
\begin{equation}\label{yyyy}
y(t) = A(t-1:0) \mathbf{1}_m + \sum^{t-1}_{s=1}A(t-1:s)\theta(s),
\end{equation}
where $\theta(s) = \hat{y}(s) - y(s)$.
Therefore the convergence and uniform lower boundedness property may not hold due to the additional term 
$$
\sum^{t-1}_{s=1}A(t-1:s)\theta(s).
$$
The following lemmas shows that $y(t)$ has a positive uniform lower bound $\delta$ and converges to $m_\zeta\phi(t)$ instead of $\phi(t)$ under Assumption \ref{y-trigger}.     
\begin{lem}\label{lem-3-4}
Suppose that Assumption \ref{y-trigger} holds. Then we have
\begin{equation}\label{m_lowerbound}
m_\zeta \geq (1-F_\zeta) m
\end{equation}
and the following estimate holds: 
\begin{equation}\label{y_consensus}
\Big\| y(t+1) -  m_{\zeta}\phi(t)\Big\|_{\infty} \leq \beta(t) \quad \forall~t \in \mathbb{N},
\end{equation}
where 
\begin{equation*}
\beta (t) =m\bigg(\Big(F_\zeta-F_{\zeta}(t)\Big) +C_0  \lambda^{t}+C_0\lambda^{t/2} F_\zeta(t) + \frac{\zeta([t/2]+1)}{1-\lambda}\bigg).
\end{equation*}
In addition, we have 
\begin{equation*}
\lim_{t \rightarrow \infty} t^{3/2} \beta (t) =0.
\end{equation*}
 
\end{lem}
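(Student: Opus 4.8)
The plan is to start from the closed form \eqref{yyyy} for $y$: replacing $t$ by $t+1$ there (and using $\theta(0)=0$) gives $y(t+1) = A(t:0)\mathbf{1}_m + \sum_{s=1}^{t} A(t:s)\theta(s)$. The $y$-triggering rule in Algorithm \ref{algo} forces $\|\theta(s)\|_\infty \le \zeta(s)$ for every $s$: if agent $i$ sends at time $s$ then $\theta_i(s)=0$, and otherwise $|\theta_i(s)| = |\hat y_i(s-1) - y_i(s)| < \zeta(s)$. Since every $A(\cdot)$ is column stochastic, $\mathbf{1}_m^T A(t:s) = \mathbf{1}_m^T$, so $\mathbf{1}_m^T y(t+1) = m + \sum_{s=1}^{t} \mathbf{1}_m^T \theta(s)$, which is a partial sum of the series defining $m_\zeta$ in \eqref{mzeta}. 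Then \eqref{m_lowerbound} is immediate: $m_\zeta \ge m - \sum_{s\ge1}|\mathbf{1}_m^T\theta(s)| \ge m - m\sum_{s\ge1}\zeta(s) = (1-F_\zeta)m$.

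For \eqref{y_consensus} I would substitute the estimate \eqref{MatrixEstimate}, i.e.\ $[A(t:s)]_{ij} = \phi_i(t) + \varepsilon_{ij}(t,s)$ with $|\varepsilon_{ij}(t,s)| \le C_0\lambda^{t-s}$, into the formula for $y_i(t+1)$. The leading contributions add up to $\phi_i(t)\big(m + \sum_{s=1}^{t}\mathbf{1}_m^T\theta(s)\big) = \phi_i(t)\,\mathbf{1}_m^T y(t+1)$, which differs from $m_\zeta\phi_i(t)$ only by the tail $\phi_i(t)\sum_{s>t}\mathbf{1}_m^T\theta(s)$, bounded in absolute value by $m\sum_{s>t}\zeta(s) = m(F_\zeta - F_\zeta(t))$. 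The remaining error is $\sum_j\varepsilon_{ij}(t,0) + \sum_{s=1}^{t}\sum_j \varepsilon_{ij}(t,s)\theta_j(s)$; the first term is at most $mC_0\lambda^t$, and the double sum is where the real work is: splitting the index range at $s=[t/2]$, for $s\le[t/2]$ one bounds $\lambda^{t-s}\le\lambda^{t/2}$ and keeps the weight $\zeta(s)$, getting a contribution $\le mC_0\lambda^{t/2}F_\zeta(t)$, while for $s>[t/2]$ one uses monotonicity of $\zeta$ to pull out $\zeta([t/2]+1)$ and sums the geometric factors $\sum_{s}\lambda^{t-s}\le 1/(1-\lambda)$, giving a contribution of order $\zeta([t/2]+1)/(1-\lambda)$. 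Adding the four pieces yields $\|y(t+1)-m_\zeta\phi(t)\|_\infty\le\beta(t)$. This dyadic split is the main obstacle: it is what balances the geometric mixing rate $\lambda^{t-s}$ against the only polynomially decaying thresholds $\zeta(s)$, and keeping $\zeta(s)$ inside the error rather than crudely bounding $|\theta_j(s)|$ is essential for the final $t^{3/2}$-summability.

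Finally, $\lim_{t\to\infty}t^{3/2}\beta(t)=0$ is checked term by term. The two geometric terms $t^{3/2}C_0\lambda^t$ and $t^{3/2}C_0\lambda^{t/2}F_\zeta(t)$ vanish because $F_\zeta(t)\le F_\zeta<\infty$ and an exponential beats any polynomial. For the tail term, $t^{3/2}(F_\zeta-F_\zeta(t)) = t^{3/2}\sum_{s>t}\zeta(s)\le\sum_{s>t}s^{3/2}\zeta(s)\to0$ since $F_{\zeta_{3/2}}=\sum_s s^{3/2}\zeta(s)<\infty$ by Assumption \ref{y-trigger}. For the last term, write $u=[t/2]+1\ge t/2$, so $t^{3/2}\zeta([t/2]+1)\le2^{3/2}u^{3/2}\zeta(u)$, and $u^{3/2}\zeta(u)\to0$ because convergence of $\sum_s s^{3/2}\zeta(s)$ forces its general term to $0$. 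Hence $t^{3/2}\beta(t)\to0$, completing the proof.
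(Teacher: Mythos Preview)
Your proposal is correct and follows essentially the same route as the paper: expand $y(t+1)$ via \eqref{yyyy}, separate the $\phi(t)$-multiple of the partial sum $m+\sum_{s\le t}\mathbf 1_m^T\theta(s)$ from the remainder, bound the tail by $m(F_\zeta-F_\zeta(t))$, the initial-condition error by $mC_0\lambda^t$, and handle $\sum_{s=1}^t\lambda^{t-s}\zeta(s)$ by the same dyadic split at $[t/2]$; the limit $t^{3/2}\beta(t)\to0$ is argued term by term exactly as you do. Your treatment of the last term $t^{3/2}\zeta([t/2]+1)\to0$ via $u^{3/2}\zeta(u)\to0$ is in fact slightly more explicit than the paper's.
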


\begin{proof}
Observe that $|\theta_i (s)| =|\hat{y}_i (s) - y_i (s)| \leq \zeta (s)$ for $s \geq 1$ by the event-triggering condition, and so
\begin{equation*}
\Big|\sum_{s=1}^{\infty}1_m^T \theta (s) \Big|\leq m \sum_{s=1}^{\infty} \zeta(s) = m F_\zeta.
\end{equation*}
Using this in \eqref{mzeta}, we get
\begin{equation*}
m_{\zeta} = m + \sum_{s=1}^{\infty} 1_m^T \theta(s) \geq (1-F_\zeta)m,
\end{equation*}
which proves \eqref{m_lowerbound}. 

Next we prove \eqref{y_consensus}. For each $s \geq 0$, by definition we have
\begin{equation*}
y(s+1) = A(s) \hat{y}(s) = A(s) (y(s) + \theta(s)),
\end{equation*}
where we have set $\theta (0)=0$. Using this iteratively gives the following formula
\begin{equation*}
\begin{split}
&y(t+1)
\\
& = A(t:0)y(0) + \sum_{s=1}^{t} A(t:s) \theta(s)
\\
& = \phi (t) \Big[ \mathbf{1}_m^T y(0) + \sum_{s=1}^{t} \mathbf{1}_m^T \theta (s) \Big] 
\\
&\quad + (A(t:0) - \phi (t) 1_m^T) y(0) + \sum_{s=1}^{t} \Big[ (A(t:s) - \phi (t) \mathbf{1}_m^T) \theta (s) \Big].
\end{split}
\end{equation*}
Since $|\theta_j (s)| \leq \zeta(s)$, we find
\begin{equation*}
\begin{split}
\sum_{s=t+1}^{\infty} |\theta_j (s)| &\leq \sum_{s=t+1}^{\infty}\zeta(s) = F_\zeta-F_{\zeta}(t).
\end{split}
\end{equation*}
Using the above inequality, we obtain
\begin{equation}\label{eq-3-310}
\Big|m_{\zeta} - 1_m^T y(0) - \sum_{s=0}^{t}1_m^T \theta (s) \Big|= \Big| \sum_{s=t+1}^{\infty} 1_m^T\theta (s) \Big| \leq \Big(F_\zeta-F_{\zeta}(t)\Big)m.
\end{equation}
Hence we have
\begin{equation}\label{eq-3-340}
\begin{split}
&\Big\| y(t+1) - m_\zeta \phi(t)\Big\|_\infty
\\
 &\leq  \Big(F_\zeta-F_{\zeta}(t)\Big)m + \Big\|(A(t:0) - \phi (t) 1_m^T) y(0)\Big\|_\infty\\
&\quad + \Big\|\sum_{s=1}^{t} \Big[ (A(t:s) - \phi (t) 1_m^T) \theta (s) \Big]\Big\|_{\infty}.
\end{split}
\end{equation}
Now we estimate the second and third terms in the right hand side of \eqref{eq-3-340}. Using \eqref{MatrixEstimate} we have
\begin{equation}\label{eq-3-320}
\Big\|(A(t:0) - \phi (t) 1_m^T) y(0)\Big\|_\infty \leq m C_0  \lambda^{t} 
\end{equation}
and
\begin{equation}\label{eq-3-330}
\begin{split}
&\Big\|\sum_{s=1}^{t} \Big[ (A(t:s) - \phi (t) 1_m^T) \theta (s) \Big]\Big\|_{\infty} 
\\
&\leq mC_0  \sum_{s=1}^{t}\lambda^{t-s} \zeta (s) 
\\
&\leq mC_0 \bigg(\sum^{[t/2]}_{s=1} \lambda^{t-s}\zeta(s) + \sum^t_{s=[t/2] +1} \lambda^{t-s} \zeta(s)\bigg)\\
&\leq mC_0\bigg( \lambda^{t/2} F_\zeta(t) + \zeta([t/2]+1) \sum^t_{s=[t/2] +1} \lambda^{t-s}\bigg)\\
& \leq mC_0\bigg(\lambda^{t/2} F_\zeta(t) + \frac{\zeta([t/2]+1)}{1-\lambda} \bigg).
\end{split}
\end{equation}
Putting the estimates \eqref{eq-3-320} and \eqref{eq-3-330} in \eqref{eq-3-340}, we get
\begin{equation*}
\begin{split}
&\Big\| y(t+1) - \phi(t) m_{\zeta}\Big\|_{\infty}
\\
& \leq  m\bigg(\Big(F_\zeta-F_{\zeta}(t)\Big) +C_0  \lambda^{t}+C_0\lambda^{t/2} F_\zeta(t) + \frac{\zeta([t/2]+1)}{1-\lambda}\bigg).
\end{split}
\end{equation*}
  This proves the second assertion of the lemma.
  
Now we shall show that $\lim_{t \rightarrow \infty} t^{3/2} \beta (t) = 0$. Since $\lambda \in (0,1)$, it suffices to show that 
\begin{equation*}
\lim_{t \rightarrow \infty} t^{3/2} (F_{\zeta} - F_{\zeta}(t) + \zeta([t/2]))=0.
\end{equation*}
This fact follows directly from  the fact that $\sum_{t=0}^{\infty} t^{3/2} \zeta (t) < \infty$ and the following inequality
\begin{equation*}
t^{3/2} (F_{\zeta} -F_{\zeta}(t)) = t^{3/2} \sum_{s=t+1}^{\infty}\zeta (s) \leq \sum_{s=t+1}^{\infty}s^{3/2} \zeta (s).
\end{equation*}
The proof is done.
\end{proof}

\begin{lem}\label{lowerbound}
Suppose that Assumption \ref{y-trigger} holds. Then  the value $\delta \in \mathbb{R}$ defined in \eqref{delta} is positive.
\end{lem}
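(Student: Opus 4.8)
The plan is to split $\mathbb{N}$ into a finite initial segment — on which $y_i(t)>0$ follows directly from the update rule — and a tail, on which a uniform positive lower bound is read off from the quantitative estimates already in hand. Two ingredients will be used: (i) Lemma \ref{lem-3-4}, which gives $m_\zeta \ge (1-F_\zeta)m$, a strictly positive number since $F_\zeta = \sum_{t\ge1}\zeta(t)<1$ by Assumption \ref{y-trigger}, together with $\|y(t+1)-m_\zeta\phi(t)\|_\infty \le \beta(t)$ and $t^{3/2}\beta(t)\to 0$ (so in particular $\beta(t)\to0$); and (ii) Lemma \ref{lemma2}, which provides the matrix estimate \eqref{MatrixEstimate} and the lower bound $[A(t:0)\mathbf{1}]_i \ge Q>0$ of \eqref{Q}.

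\emph{Step 1 (tail).} I would first bound $\phi_i(t)$ away from $0$ for large $t$. Writing $[A(t:0)\mathbf{1}]_i = \sum_{j=1}^m [A(t:0)]_{ij}$ and applying \eqref{MatrixEstimate} with $s=0$ to each of the $m$ summands gives $\big|[A(t:0)\mathbf{1}]_i - m\phi_i(t)\big| \le mC_0\lambda^{t}$, hence by \eqref{Q}
\begin{equation*}
m\,\phi_i(t) \;\ge\; [A(t:0)\mathbf{1}]_i - mC_0\lambda^{t} \;\ge\; Q - mC_0\lambda^{t}.
\end{equation*}
Choosing $t_0$ so that $mC_0\lambda^{t_0}\le Q/2$, we get $\phi_i(t)\ge Q/(2m)$ for all $i$ and all $t\ge t_0$. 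Feeding this and $m_\zeta\ge(1-F_\zeta)m$ into the bound $y_i(t+1)\ge m_\zeta\phi_i(t)-\beta(t)$ coming from Lemma \ref{lem-3-4} yields $y_i(t+1)\ge \frac{(1-F_\zeta)Q}{2}-\beta(t)$ for $t\ge t_0$. Since $\beta(t)\to0$, there is $t_1\ge t_0$ with $\beta(t)\le\frac{(1-F_\zeta)Q}{4}$ for $t\ge t_1$, and therefore $y_i(t)\ge\frac{(1-F_\zeta)Q}{4}>0$ for all $i$ and all $t\ge t_1+1$.

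\emph{Step 2 (initial segment and conclusion).} The set $\{t: t\le t_1\}$ is finite, so it suffices to know $y_i(t)>0$ there. I would prove by induction on $t$ that $y_i(t)>0$ and $\hat y_i(t)>0$ for every $i$: the base case is $y_i(0)=\hat y_i(0)=1$, and in the inductive step $y_i(t+1)=\sum_j a_{ij}(t)\hat y_j(t)\ge a_{ii}(t)\hat y_i(t)>0$ because $i\in N_i^{\text{in}}(t)$ forces $a_{ii}(t)=1/d_i^{\text{out}}(t)>0$, while $\hat y_i(t+1)$ equals either $y_i(t+1)$ or $\hat y_i(t)$, both positive. Consequently $\min_{1\le i\le m}\min_{t\le t_1}y_i(t)$ is a minimum of finitely many positive numbers and hence positive, and
\begin{equation*}
\delta \;\ge\; \min\Big\{\, \min_{1\le i\le m}\ \min_{t\le t_1} y_i(t),\ \ \frac{(1-F_\zeta)Q}{4} \,\Big\} \;>\; 0.
\end{equation*}

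The one delicate point is Step 1: the limiting vector $\phi(t)$ in \eqref{MatrixEstimate} itself depends on $t$, so one cannot simply pass to a limit; the remedy is to compare $\phi_i(t)$ with $[A(t:0)\mathbf{1}]_i$ at the \emph{same} time $t$ and exploit the uniform-in-$t$ bound $Q$, after which the geometric decay $C_0\lambda^{t}$ disposes of large $t$ and the elementary induction disposes of the finitely many small $t$. The remainder is bookkeeping with constants already introduced.
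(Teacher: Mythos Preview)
Your proof is correct and follows essentially the same approach as the paper: both combine the lower bound $m\phi_i(t)\ge Q-mC_0\lambda^t$ (from \eqref{MatrixEstimate} and \eqref{Q}) with the estimate $y_i(t+1)\ge m_\zeta\phi_i(t)-\beta(t)$ of Lemma \ref{lem-3-4} to handle large $t$, and then use the positivity of the diagonal entries $a_{ii}(t)$ together with the triggering rule to show $y_i(t)>0$ on the remaining finite initial segment. Your version is slightly more explicit about the constants and the induction for the initial segment, but the structure and ingredients are identical.
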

\begin{proof}
Note that from \eqref{MatrixEstimate} and \eqref{Q}, we have
\begin{equation*}
\begin{split}
m \phi_i (t) & = \sum_{j=1}^m [A(t:0)]_{ij} + \sum_{j=1}^m \Big( \phi_i (t) - [A(t:0)]_{ij}\Big)
\\
&\geq   Q - mC_0  \lambda^t.
\end{split}
\end{equation*}
Using the above inequality and Lemma \ref{lem-3-4}, we deduce for each $1\leq i \leq m$ the following estimate 
\begin{equation*}
\begin{split}
y_i (t+1) &\geq m_{\zeta}\phi_i (t)  -\beta(t)
\\
& \geq (m_{\zeta}/m)Q - m_{\zeta} C_0 \lambda^t - \beta (t).
\end{split}
\end{equation*}

Since $\beta(t)$ converges to zero as $t$ goes to infinity and $\lambda \in (0,1)$, there exists a time $T\in \mathbb{N}$ and a constant $\tilde{\delta}>0$ such that for any $t\geq T$, 
\begin{equation}\label{ddd}
y_i(t+1) \geq  \tilde{\delta}.
\end{equation}
Note that by Assumption \ref{graph}, each matrix $A(t)$ has no zero row. This fact, together with the definition of $\hat{y}$ and \eqref{hatyy}, for any $t \in \mathbb{N}$ we have 
\begin{equation}\label{eq-3-35}
\min_{1\leq i\leq m} y_i (t) >0.
\end{equation}
Therefore, combining \eqref{ddd} with \eqref{eq-3-35}, we conclude that  $\delta$ defined in \eqref{delta} satisfies 
\begin{equation*}
\delta \geq \min_{1\leq i \leq m}{\{y_i(0), y_i(1), \cdots, y_i(T), \tilde{\delta}\}}>0.
\end{equation*}
The proof is done.
\end{proof}

\subsection{Disagreement in Agent Estimates}
In this subsection, we derive a bound of the disagreement in agent estimates $\{\hat{z}_i (t)\}_{i=1}^m$ that will be used in the proofs of the main theorems. In the case without event-triggering $(\tau(t)=\zeta(t)=0)$, the paper \cite{Ned Push sum} proved that $\|\hat{z}_i(t+1) -\bar{x}(t)\|$ converges to zero for the stepsize satisfying Assumption \ref{stepsize} as $t$ goes to infinity. For the event-triggered case, the following proposition shows that the values $\{\hat{z}_i(t)\}_{i=1}^m$ approach $B_{\zeta}\bar{x}(t)$ instead of $\bar{x}(t)$ as $t$ goes to infinity due to the effect of the threshold $\zeta(t)$ for the triggering condition of $\{y_i(t)\}_{i=1}^m$. 
\begin{prop}\label{mean}
%Let $\bar{x}(t)=\frac{1}{m}\sum^m_{i=1}x_i(t)$. Then
For any $t \geq 1$ we have
\begin{equation*}
\begin{split}
&\|\hat{z}_i(t+1)-B_{\zeta}\bar{x}(t)\|\\
&\leq \frac{1}{\delta}\bigg(\Big( C_0  \lambda^t +K(t)\Big) \|x(0)\|_1
\\
&\quad + m\sum_{s=0}^{t-1} \Big[ C_0 \lambda^{t-s-1} +K(t)\Big] \Big( \alpha (s+1) D +  \tau (s)\Big)\bigg) \\
&\qquad +\frac{1}{\delta}\bigg(  d_i(t)\tau(t)\bigg),
\end{split}
\end{equation*}
where $K(t) = \beta(t)/m_{\zeta}$ and for $t=0$ we have
\begin{align*}
\|\hat{z}_i(1)-\bar{x}(0)\|\leq \frac{2C_0 }{\delta}  \|x(0)\|,
\end{align*}
where the constant $\delta>0$ satisfies $y_i(t)>\delta$ for all $t>0$. 
\end{prop}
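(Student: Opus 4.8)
The plan is to reduce everything to an estimate on the numerator of $\hat z_i(t+1)=\widehat w_i(t+1)/y_i(t+1)$, bounding the denominator below by $\delta>0$ via Lemma~\ref{lowerbound}. First I would introduce the triggering error vectors $e(s):=\hat x(s)-x(s)$: since $\hat x_i(0)=x_i(0)$ we have $e(0)=0$, and the trigger test on $\hat x$ forces $\|e_i(s)\|\le\tau(s)$ for $s\ge 1$, whether or not agent $i$ transmits. Substituting $\hat x(s)=x(s)+e(s)$ into $\widehat w(s+1)=A(s)\hat x(s)$ and $x(s+1)=\widehat w(s+1)-\alpha(s+1)\nabla F(s+1)$, where $\nabla F(s+1)$ stacks the rows $\nabla f_i(\hat z_i(s+1))$, and iterating, I obtain the closed form
\[
\widehat w(t+1)=A(t:0)x(0)+\sum_{s=1}^{t-1}A(t:s)e(s)+A(t)e(t)-\sum_{s=0}^{t-1}\alpha(s+1)A(t:s+1)\nabla F(s+1).
\]
In parallel, left-multiplying the same recursion by $\mathbf{1}_m^T$ and using that each $A(s)$ is column stochastic collapses the matrix products and gives $m\bar x(t)=\mathbf{1}_m^T x(0)+\sum_{s=1}^{t-1}\mathbf{1}_m^T e(s)-\sum_{s=0}^{t-1}\alpha(s+1)\mathbf{1}_m^T\nabla F(s+1)$, and from the increment bound $\|\bar x(s+1)-\bar x(s)\|\le\tau(s)+\alpha(s+1)D$ (Assumption~\ref{GB}) the a priori growth bound $\|\bar x(t)\|\le\frac{1}{m}\|x(0)\|_1+\sum_{s=0}^{t-1}(\alpha(s+1)D+\tau(s))$.

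Next I would replace every product $A(t:s)$ appearing in the formula for $\widehat w(t+1)$ by $\phi(t)\mathbf{1}_m^T$, bounding the deviation componentwise through \eqref{MatrixEstimate} as $\|[(A(t:s)-\phi(t)\mathbf{1}_m^T)v]_i\|\le C_0\lambda^{t-s}\|v\|_1$, used with $v=x(0)$, with $v=e(s)$ (and $\|e(s)\|_1\le m\tau(s)$), and with $v=\nabla F(s+1)$ (and $\|\nabla F(s+1)\|_1\le mD$). The $\phi_i(t)\mathbf{1}_m^T(\cdot)$ main parts then reassemble, by the closed form for $m\bar x(t)$, into exactly $\phi_i(t)\,m\bar x(t)$ — this is where $e(0)=0$ and the deliberate isolation of the last term $A(t)e(t)$ are needed — while the collected errors total at most $C_0\lambda^t\|x(0)\|_1+mC_0\sum_{s=0}^{t-1}\lambda^{t-s-1}(\alpha(s+1)D+\tau(s))$. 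The isolated term is bounded crudely by $\|[A(t)e(t)]_i\|\le\tau(t)\sum_{j\in N_i^{\text{in}}(t)}a_{ij}(t)\le d_i(t)\tau(t)$ (with $d_i(t)=|N_i^{\text{in}}(t)|$). Hence $\widehat w_i(t+1)=\phi_i(t)\,m\bar x(t)+[A(t)e(t)]_i+\mathcal{E}_i(t)$ with $\|\mathcal{E}_i(t)\|$ controlled as above.

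Then I would write $y_i(t+1)=m_\zeta\phi_i(t)+\big(y_i(t+1)-m_\zeta\phi_i(t)\big)$ with $|y_i(t+1)-m_\zeta\phi_i(t)|\le\beta(t)$ from Lemma~\ref{lem-3-4}, and form $\widehat w_i(t+1)-B_\zeta\bar x(t)\,y_i(t+1)$. By \eqref{mzeta}--\eqref{bzeta}, $B_\zeta$ is exactly the constant for which the leading terms $\phi_i(t)\,m\bar x(t)$ and $B_\zeta\bar x(t)\,m_\zeta\phi_i(t)$ cancel, so only three pieces survive: $[A(t)e(t)]_i$ (bounded by $d_i(t)\tau(t)$), $\mathcal{E}_i(t)$ (bounded above), and the cross term $-B_\zeta\bar x(t)\big(y_i(t+1)-m_\zeta\phi_i(t)\big)$, whose norm is at most $\frac{\beta(t)}{m_\zeta}\,m\|\bar x(t)\|=K(t)\big(\|x(0)\|_1+m\sum_{s=0}^{t-1}(\alpha(s+1)D+\tau(s))\big)$ by the growth bound on $\bar x(t)$ and the identity $K(t)=\beta(t)/m_\zeta$. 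Dividing by $y_i(t+1)\ge\delta$ and collecting the three families of terms yields the stated inequality for $t\ge 1$. The case $t=0$ is handled directly: $\widehat w(1)=A(0)x(0)$ and $y(1)=A(0)\mathbf{1}_m$ since $\hat x(0)=x(0)$ and $\hat y(0)=\mathbf{1}_m$, so $\hat z_i(1)-\bar x(0)=\big([A(0)x(0)]_i-\bar x(0)[A(0)\mathbf{1}_m]_i\big)/[A(0)\mathbf{1}_m]_i$; applying \eqref{MatrixEstimate} at $t=s=0$ to both $x(0)$ and $\mathbf{1}_m$ bounds the numerator by $2C_0\|x(0)\|_1$ and the denominator by $\delta$ below. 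I expect the main obstacle to be the bookkeeping in the second step: arranging the telescoping and the index ranges (sums starting at $0$ versus $1$, exponents $\lambda^{t-s}$ versus $\lambda^{t-s-1}$, the $s+1$ shift on the gradient matrices) so that the $\phi_i(t)$-proportional parts reconstruct $m\bar x(t)$ exactly and the residual splits cleanly into precisely the error families displayed in the statement.
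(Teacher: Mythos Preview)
Your plan is correct and follows essentially the same route as the paper. The only organizational difference is that the paper defines the triggering error one step later, as $e_i(t+1)=\hat w_i(t+1)-w_i(t+1)=[A(t)(\hat x(t)-x(t))]_i$, bundles it with the gradient into $\varepsilon(s)=\alpha(s+1)\nabla F(s+1)-e(s+1)$, and then expands $[A(t)x(t)]_i$ rather than $\widehat w_i(t+1)$; after this relabeling the two arguments coincide term by term, including the isolation of the last error (your $[A(t)e(t)]_i$, their $e_i(t+1)$) and the use of $y(t+1)=m_\zeta\phi(t)+r(t)$ from Lemma~\ref{lem-3-4}. One small correction: in the paper $d_i(t)=\sum_j a_{ij}(t)$, not $|N_i^{\text{in}}(t)|$; your bound $\|[A(t)e(t)]_i\|\le\tau(t)\sum_j a_{ij}(t)$ is exactly this, so just use the paper's definition rather than the (larger) in-degree.
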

%In the following lemma, we show that there is a uniform positive lower bound of $y_i (t)$. 
%\begin{lem}\label{lowerbound}
%There exists $\delta >0$ such that
%\begin{equation}
%y_i (t) \geq \delta 
%\end{equation}
%or all $1\leq i \leq m$ and $t \in \mathbb{N}$.
%\end{lem}
To prove Proposition \ref{mean}, we consider a variable  $w_i(t+1) \in \mathbb{R}^d$ which is a companion to the variable $\hat{w}_i (t+1) \in \mathbb{R}^n$ defined as 
\begin{equation}\label{eq-3-10}
w_i(t+1) = \sum^m_{j=1} a_{ij} (t) x_j(t),
\end{equation}
and their difference 
\begin{equation}\label{error}
e_i(t+1) = \hat{w}_i(t+1) - w_i(t+1).
\end{equation}
Then we may rewrite the gradient step \eqref{update} as
\begin{equation}\label{PEalgorithm}
\begin{split}
x_i(t+1)& = w_i(t+1) - \alpha(t+1) \nabla f_i(\hat{z}_i(t+1)) + e_i(t+1).
%\\
%&= \sum^m_{j=1} a_{ij} (t) x_j(t) - \alpha(t+1) \nabla f_i(\hat{z}_i(t+1)) + e_i(t+1).
\end{split}
\end{equation} Summing up \eqref{PEalgorithm} for $1\leq i \leq m$ and using that $A(t)$ is column-stochastic, we have
\begin{equation}\label{eq-2-1a}
\bar{x}(t+1) = \bar{x}(t) - \frac{\alpha(t+1)}{m}\sum^m_{i=1}\nabla f_i(\hat{z}_i(t+1)) + \frac{1}{m}\sum^m_{i=1}e_i(t+1).
\end{equation} 
Now we  find a bound of $e_i(t+1)$ which is the difference  between $w_i(t+1)$ and $\hat{w}_i(t+1)$ associated to  the event-triggering $\tau(t)$.  
\begin{lem}
The quantity $e_i(t+1)$ defined in \eqref{error} satisfies
\begin{equation}\label{error_estimate}
\|e_i(t+1)\| \leq d_i (t) \tau(t),
\end{equation}
where $d_i (t)= \sum_{j=1}^m a_{ij}(t)$. In addition, we have
\begin{equation}\label{error_estimate_2}
\sum_{i=1}^m \|e_i (t+1) \| \leq m \tau (t).
\end{equation}
\end{lem}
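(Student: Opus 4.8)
The plan is to unwind the definitions \eqref{eq-3-10}--\eqref{error} and reduce the whole statement to a single pointwise bound on $\hat{x}_j(t)-x_j(t)$ that comes straight out of the event-triggering rule for the $x$-variables. Subtracting \eqref{eq-3-10} from the defining relation $\widehat{w}_i(t+1)=\sum_{j=1}^m a_{ij}(t)\hat{x}_j(t)$ in Algorithm \ref{algo}, one gets
\begin{equation*}
e_i(t+1) = \widehat{w}_i(t+1) - w_i(t+1) = \sum_{j=1}^m a_{ij}(t)\bigl(\hat{x}_j(t) - x_j(t)\bigr),
\end{equation*}
so that, since $a_{ij}(t)\geq 0$, the triangle inequality gives $\|e_i(t+1)\| \leq \sum_{j=1}^m a_{ij}(t)\,\|\hat{x}_j(t) - x_j(t)\|$. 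Everything then hinges on bounding $\|\hat{x}_j(t) - x_j(t)\|$ for each $j$.

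The key observation I would isolate is that $\|\hat{x}_j(t)-x_j(t)\|\leq \tau(t)$ for every $j$ and every $t\geq 1$. This follows by inspecting the two branches of the first \textbf{if} block of Algorithm \ref{algo} at iteration $t-1$: if the condition $\|x_j(t)-\hat{x}_j(t-1)\|\geq \tau(t)$ held, then the algorithm sets $\hat{x}_j(t)=x_j(t)$ and the difference is zero; otherwise the algorithm sets $\hat{x}_j(t)=\hat{x}_j(t-1)$, and the failure of the triggering test reads exactly $\|x_j(t)-\hat{x}_j(t-1)\|<\tau(t)$, i.e. $\|x_j(t)-\hat{x}_j(t)\|<\tau(t)$. (For $t=0$ the difference vanishes by the initialization $\hat{x}_j(0)=x_j(0)$, although this case is not needed here since the bound is stated for $e_i(t+1)$ with $t\geq 0$, i.e. for the index $t$.) Substituting this uniform bound into the previous display yields
\begin{equation*}
\|e_i(t+1)\| \leq \tau(t)\sum_{j=1}^m a_{ij}(t) = d_i(t)\,\tau(t),
\end{equation*}
which is \eqref{error_estimate}.

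For \eqref{error_estimate_2} I would simply sum the last inequality over $i$, interchange the order of summation, and invoke column stochasticity of $A(t)$:
\begin{equation*}
\sum_{i=1}^m \|e_i(t+1)\| \leq \tau(t)\sum_{i=1}^m\sum_{j=1}^m a_{ij}(t) = \tau(t)\sum_{j=1}^m\Bigl(\sum_{i=1}^m a_{ij}(t)\Bigr) = \tau(t)\sum_{j=1}^m 1 = m\,\tau(t).
\end{equation*}
There is essentially no analytic obstacle in this lemma; the only point requiring a little care is the bookkeeping with the time indices in the triggering rule — making sure that the estimate $\|\hat{x}_j(t)-x_j(t)\|\leq\tau(t)$ is extracted correctly from the \emph{previous} iteration's \textbf{if}/\textbf{else} and that it is stated uniformly over all agents $j$ before one sums over $i$ and uses that each column of $A(t)$ sums to one.
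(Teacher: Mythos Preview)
Your proof is correct and follows essentially the same approach as the paper: express $e_i(t+1)$ as $\sum_j a_{ij}(t)(\hat{x}_j(t)-x_j(t))$, apply the triangle inequality together with the triggering bound $\|\hat{x}_j(t)-x_j(t)\|\leq\tau(t)$, and then sum over $i$ using column stochasticity of $A(t)$. Your additional explicit justification of the bound $\|\hat{x}_j(t)-x_j(t)\|\leq\tau(t)$ via the two branches of the \textbf{if} block is a welcome clarification that the paper leaves implicit.
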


\begin{proof}
By using the triggering condition, we have
\begin{align*}
\|e_i(t+1)\| &\leq \|\hat{w}_i(t+1) -w_i(t+1)\| \\
&\leq \bigg\| \sum^m_{j=1} a_{ij}(t)(\hat{x}_j(t)-x_j(t))  \bigg\| \\
&\leq \sum^m_{j=1}a_{ij}(t)  \|\hat{x}_j(t)-x_j(t)\| \leq d_i (t) \tau(t),
\end{align*}
which proves \eqref{error_estimate}. Summing this over $1\leq i \leq m$ and using that $A(t)$ is column stochastic, we find
\begin{equation*}
\begin{split}
\sum_{i=1}^m \|e_i (t+1) \| &\leq \sum_{i=1}^m \sum_{j=1}^m \Big(a_{ij} (t) \tau (t)\Big) 
\\
&= \sum_{j=1}^m \Big( \sum_{i=1}^m a_{ij}(t)\Big) \tau (t) = m \tau (t).
\end{split}
\end{equation*} 
The proof is finished.
\end{proof}

Now we are ready to prove Proposition \ref{mean}.
\begin{proof}[Proof of Proposition \ref{mean}]
We regard $x_k (t)$ as a row vector in $\mathbb{R}^{1 \times d}$ and define the variables $x(t)\in \mathbb{R}^{m \times d}$, $\nabla f(\hat{z}(t))\in \mathbb{R}^{m \times d}$, and $e (t) \in \mathbb{R}^{m \times d}$ as
\begin{equation*}
\medmath{
x(t) = \begin{pmatrix} x_1 (t) \\ \vdots \\ x_m (t) \end{pmatrix},~ \nabla f(\hat{z}(t)) = \begin{pmatrix} \nabla f_1 (\hat{z}_1 (t)) \\ \vdots \\ \nabla f_m (\hat{z}_m (t))\end{pmatrix},~ e (t) = \begin{pmatrix} e_1 (t) \\ \vdots \\ e_m (t) \end{pmatrix}.}
\end{equation*}
Note that by \eqref{error} and \eqref{hatzz}, we have 
$$
\hat{z}_i(t+1) = \frac{w_i(t+1) +e_i(t+1)}{y_i(t+1)}.
$$ 
Also we see from definition \eqref{bzeta} that
$$
B_{\zeta}\bar{x}(t) =\frac{m\bar{x}(t)}{m_\zeta}=\frac{\mathbf{1}_m^Tx(t)}{m_\zeta}.
$$
Using these formulas and \eqref{eq-3-10} we have
\begin{equation}\label{eq-3-51}
\begin{split}
&\hat{z}_i (t+1) -B_{\zeta} \bar{x}(t)
\\
& = \frac{w_i (t+1) + e_i (t+1)}{y_i (t+1)} -  \frac{\mathbf{1}_m^T x(t)}{m_{\zeta}}
\\
& = \frac{1}{y_i (t+1)} \bigg( [A(t)x(t)]_i - y_{i} (t+1) \frac{1_m^T x(t)}{m_{\zeta}}\bigg) + \frac{e_i (t+1)}{y_i (t+1)}.
\end{split}
\end{equation}
To estimate the first term on the right hand side of the last equality,  we rewrite \eqref{PEalgorithm} as
\begin{equation*}
{x}(t+1) = A(t) {x}(t) - \alpha(t+1) \nabla f(\hat{z}(t+1)) + e(t+1).
\end{equation*}
Using this formula recursively, for $t \geq 1$ we  have
\begin{equation}\label{Ax}
A (t) {x}(t)=A(t:0) {x}(0)-\sum^{t-1}_{s=0}A(t:s+1)\varepsilon(s),
\end{equation}
where we have let 
$$\varepsilon(s)=\alpha(s+1)\nabla f(\hat{z}(s+1))-e(s+1).$$
Using Assumption \ref{GB} and \eqref{error_estimate_2} we have the following bound 
\begin{equation}\label{eq100}
\|\varepsilon(s)\|_1 \leq m\Big(\alpha(s+1)D +\tau (s)\Big).
\end{equation}
Since $A(t)$ is column stochastic we have $1_m^T A(t) = 1_m^T$, and combine this with \eqref{Ax} to have
\begin{equation}\label{barx}
\mathbf{1}_m^T x(t) = \mathbf{1}_m^T x(0) -\sum^{t-1}_{s=0} \mathbf{1}_m^T \varepsilon(s). 
\end{equation}
Combining \eqref{Ax} and \eqref{barx} yields
\begin{equation}\label{important}
\begin{split}
A(t) x(t)&= \phi(t)\mathbf{1}_m^T x(t)+\Big(A(t:0)-\phi (t)\mathbf{1}_m^T\Big)x(0) 
\\
&\quad -\sum^{t-1}_{s=0}\Big(A(t:s+1)-\phi (t)\mathbf{1}_m^T\Big)\varepsilon (s),
\end{split}
\end{equation}
where $\phi (t)$ is the stochastic vector satisfying \eqref{MatrixEstimate}. By Lemma \ref{lem-3-4}, for $y(t):= (y_1 (t), \cdots, y_m (t))^T \in \mathbb{R}^{m \times 1}$ we have
\begin{equation*}
y(t+1) = m_{\zeta}\phi(t) + r(t),
\end{equation*}
where $r(t)$ satisfies $\|r(t)\|_{\infty} \leq \beta(t)$.
Combining this with \eqref{important}, we obtain
\begin{equation*}
\begin{split}
 & [A(t)x(t)]_i - y_i (t+1) \frac{1_m^T x(t)}{m_{\zeta}}
\\
& = \phi_i (t) 1_m^T x(t) + [(A(t:0)- \phi (t)1_m^T) x(0)]_i
\\
&\qquad - \sum_{s=0}^{t-1} \Big[(A(t:s+1) - \phi (t) 1_m^T) \varepsilon (s)\Big]_i
\\
&\qquad -[m_{\zeta} \phi_i (t) +r_i (t)] \frac{1_m^T x(t)}{m_{\zeta}}
\\
& =  [(A(t:0)- \phi (t)\mathbf{1}_m^T) x(0)]_i 
\\
&\qquad- \sum_{s=0}^{t-1} \Big[(A(t:s+1) - \phi (t) \mathbf{1}_m^T)\varepsilon (s)\Big]_i - r_i (t) \frac{1_m^T x(t)}{m_{\zeta}}.
\end{split}
\end{equation*}
By applying \eqref{MatrixEstimate} here, we deduce 
\begin{equation}\label{eq-3-50}
\begin{split}
&\Bigg\| [A(t)x(t)]_i -y_i (t+1) \frac{1_m^T x(t)}{m} \Bigg\|
\\
&\leq C_0   \lambda^t \|x(0)\|_1 + \sum_{s=0}^{t-1} C_0  \lambda^{t-s-1} \|\varepsilon (s)\|_1 + K(t) \|1_m^T x(t)\|,
\end{split}
\end{equation}
where $K(t) = \beta(t)/m_{\zeta}$.
From \eqref{barx} we find the following estimate
\begin{equation*}
\|1_m^T x(t)\| \leq \|x(0)\|_1 + \sum_{s=0}^{t-1}  \|\varepsilon (s)\|_1.
\end{equation*}
Combining this with \eqref{eq-3-50} and using \eqref{eq100}, we obtain
\begin{equation}\label{eq-3-52}
\begin{split}
&\bigg\| [A(t)x(t)]_i -y_i (t+1) \frac{1_m^T x(t)}{m} \bigg\|
\\
&\leq  C_0   \lambda^t \|x(0)\|_1 + \sum_{s=0}^{t-1} C_0  \lambda^{t-s-1} \|\varepsilon (s)\|_1
 \\
 &\quad+ K(t) \Big( \|x(0)\|_1 + \sum_{s=0}^{t-1} \|\varepsilon (s)\|_1\Big)
 \\
&\leq \Big( C_0  \lambda^t + K(t)\Big) \|x(0)\|_1 
\\
&\quad + m\sum_{s=0}^{t-1} \Big[ C_0 \lambda^{t-s-1} + K(t)\Big] \Big( \alpha (s+1) D +  \tau (s)\Big).
%\\
%&\leq \Big( C_0  \lambda^t + K(t)\Big) \|x(0)\|_1 + m\sum_{s=0}^{t-1} \Big[ C_0 \lambda^{t-s-1} \Big] \Big( \alpha (s+1) D +  \tau (s)\Big)  + CmtK(t),
\end{split}
\end{equation}
%where we used the fact that $\alpha (t)$ and $\tau (t)$ are non-increasing in the last inequality.
%Here the constant $C$ depends on $D$, $\alpha(1)$ and $\tau(0)$.
By applying Lemma \ref{lowerbound}, \eqref{error_estimate} and the above inequality to the norm of \eqref{eq-3-51}, we obtain
\begin{align*}
&\|\hat{z}_i(t+1)-B_{\zeta}\bar{x}(t)\| 
\\
%&\leq \frac{1}{\delta}\bigg(  \Big( C \lambda^t + \frac{C_2}{t^2}\Big) \|x(0)\|_1 + \sum_{s=0}^{t-1} \Big[ C\lambda^{t-s-1} \Big] \Big( \alpha (s+1) D + m \tau (s)\Big) +\frac{C_2}{t} (D+m)\bigg)+\frac{1}{\delta}\|e_i(t+1)\|\\
&\leq \frac{1}{\delta}\bigg(\Big( C_0  \lambda^t +K(t)\Big) \|x(0)\|_1 
\\
&\quad + m\sum_{s=0}^{t-1} \Big[ C_0 \lambda^{t-s-1}+K(t) \Big] \Big( \alpha (s+1) D +  \tau (s)\Big)\bigg)\\
&\qquad +\frac{1}{\delta}\bigg( d_i(t)\tau(t)\bigg).
\end{align*}
%By \eqref{barx} and \eqref{error_estimate}, we finally obtain
%\begin{align*}
%\|\hat{z}_i(t+1)-\bar{x}(t)\| &\leq \frac{2C}{\delta}\bigg(\lambda^t \|x(0)\| +\sum^{t-1}_{s=0}\lambda^{t-s-1} (\alpha(s+1)D +\|e(s+1)\|)\bigg)+\frac{\tau(t)}{\delta}\\
%&\leq \frac{2C}{\delta}\lambda^t \|x(0)\| + \frac{2CD}{\delta}\sum^{t-1}_{s=0}\lambda^{t-s-1}\alpha(s+1) + \frac{2Cm}{\delta}\sum^{t-1}_{s=0}\lambda^{t-s-1}\tau(s).
%\end{align*}
It remains to estimate the case $t=0$. By the algorithm, we have
\begin{equation*}
\begin{split}
\hat{z}_i (1) - \bar{x}(0) & = \frac{\hat{w}_i (1)}{y_i (1)} - \bar{x}(0) = \frac{\sum_{j=1}^m a_{ij} (0) x_j (0)}{\sum_{j=1}^m a_{ij}(0)} - \bar{x}(0).
\end{split}
\end{equation*}
Using this we find
\begin{equation*}
\begin{split}
\|z_i (1) -\bar{x}(0)\| &\leq \frac{1}{\delta} \|x(0)\|_1 + \frac{1}{m} \|x(0)\|_1
\\
& \leq 2 \|x(0)\|_1 \leq  \frac{2C_0  }{\delta} \|x(0)\|.
\end{split}
\end{equation*}
The proof is finished.
\end{proof}

By utilizing Proposition \ref{mean}, we analyze the relation between $\hat{z}_i(t)$ and $B_{\zeta}\bar{x}(t)$ under the assumptions on $\{\alpha (t)\}_{t \in \mathbb{N}}$, $\{\tau(t)\}_{t \in \mathbb{N}}$ and $\{\zeta(t)\}_{t \in \mathbb{N}}$ of the main theorems. To do this, we first recall a useful lemma from \cite{R Ned V - stochastic}.
%The following corollaries show that agents achieve consensus on their states and that the sequences related to the stepsize are summable. 
\begin{lem}[\cite{R Ned V - stochastic}, $Lemma$ 3.1]\label{lemma5}
If $\lim_{k\rightarrow \infty}\gamma_k=\gamma$ and $0<\beta<1$, then
$$
\lim_{k\rightarrow \infty} \sum^k_{l=0}\beta^{k-l}\gamma_l = \frac{\gamma}{1-\beta}
$$
\end{lem}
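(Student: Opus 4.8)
The plan is to reduce the claim to the elementary geometric identity $\sum_{l=0}^{k}\beta^{k-l}=\frac{1-\beta^{k+1}}{1-\beta}$ combined with an $\varepsilon$-splitting of the sequence $\gamma_l$ around its limit. First I would write
$\sum_{l=0}^{k}\beta^{k-l}\gamma_l=\gamma\sum_{l=0}^{k}\beta^{k-l}+\sum_{l=0}^{k}\beta^{k-l}(\gamma_l-\gamma)$. The first term equals $\gamma\,\frac{1-\beta^{k+1}}{1-\beta}$, which converges to $\gamma/(1-\beta)$ since $0<\beta<1$, so it only remains to prove that the error term $R_k:=\sum_{l=0}^{k}\beta^{k-l}(\gamma_l-\gamma)$ tends to $0$ as $k\to\infty$.

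To control $R_k$, fix $\varepsilon>0$ and use $\gamma_l\to\gamma$ to pick $N$ with $|\gamma_l-\gamma|<\varepsilon$ for all $l\geq N$; also set $S=\sum_{l=0}^{N-1}|\gamma_l-\gamma|$, a finite constant. For $k\geq N$ I would split $R_k$ into the ``old'' indices $0\leq l<N$ and the ``recent'' indices $N\leq l\leq k$. The old part is bounded by $\beta^{k-N+1}S$, which tends to $0$ as $k\to\infty$ because $\beta^{k}\to 0$ and $S$ is fixed; the recent part is bounded by $\varepsilon\sum_{l=N}^{k}\beta^{k-l}\leq\varepsilon\sum_{j=0}^{\infty}\beta^{j}=\varepsilon/(1-\beta)$. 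Hence $\limsup_{k\to\infty}|R_k|\leq\varepsilon/(1-\beta)$, and since $\varepsilon>0$ is arbitrary we get $R_k\to 0$, which finishes the proof.

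There is essentially no serious obstacle here; the only point deserving care is the order in which the limits are taken — first $k\to\infty$ with $N$ fixed, then $\varepsilon\downarrow 0$ — so that the old-index contribution is killed by the exponential decay of $\beta^{k}$ (not by smallness of $|\gamma_l-\gamma|$), while the recent-index contribution is killed by smallness of $|\gamma_l-\gamma|$ uniformly in $k$. One could alternatively invoke a Silverman--Toeplitz (regular summability) argument applied to the normalized weights $\beta^{k-l}\big/\sum_{l=0}^{k}\beta^{k-l}$, but the direct split above is shorter and self-contained, so that is the route I would take.
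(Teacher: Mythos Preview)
Your argument is correct: the decomposition $\sum_{l=0}^{k}\beta^{k-l}\gamma_l=\gamma\,\frac{1-\beta^{k+1}}{1-\beta}+R_k$ followed by the $\varepsilon$-splitting of $R_k$ at index $N$ is the standard and complete way to prove this Abel--type summability statement, and your handling of the order of limits is exactly right.

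There is no comparison to make with the paper's own proof, because the paper does not prove this lemma at all; it simply quotes it as Lemma~3.1 of \cite{R Ned V - stochastic} and uses it as a black box. Your self-contained argument therefore supplies strictly more than the paper does.
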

\begin{cor}\label{convergent}
Suppose that the triggering function $\tau$ satisfies Assumption \ref{event-triggered}. Also, assume that the stepsize $\alpha (t)$ satisfies Assumption \ref{stepsize} or $\alpha (t) =1/\sqrt{t}$. Then we have
$$
\lim_{t\rightarrow \infty} \|\hat{z}_i(t+1)-B_{\zeta}\bar{x}(t)\| = 0 \ \text{for all $i$}.
$$   
\end{cor}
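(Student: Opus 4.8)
The plan is to show that each of the three types of terms on the right-hand side of the bound in Proposition~\ref{mean} vanishes as $t\to\infty$, and then combine them. Recall that Proposition~\ref{mean} gives, for $t\geq 1$,
\begin{equation*}
\begin{split}
\|\hat{z}_i(t+1)-B_{\zeta}\bar{x}(t)\|
&\leq \frac{1}{\delta}\Big(C_0\lambda^t + K(t)\Big)\|x(0)\|_1
+ \frac{d_i(t)\tau(t)}{\delta}
\\
&\quad + \frac{m}{\delta}\sum_{s=0}^{t-1}\Big[C_0\lambda^{t-s-1} + K(t)\Big]\Big(\alpha(s+1)D + \tau(s)\Big),
\end{split}
\end{equation*}
with $K(t)=\beta(t)/m_\zeta$. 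First I would dispose of the easy terms: $C_0\lambda^t\to 0$ since $\lambda\in(0,1)$; $K(t)\to 0$ because $\beta(t)\to 0$ by Lemma~\ref{lem-3-4} (in fact $t^{3/2}\beta(t)\to 0$); and $d_i(t)\tau(t)\leq m\tau(t)\to 0$ since $\tau(t)\to 0$ under Assumption~\ref{event-triggered}. Also $\delta>0$ by Lemma~\ref{lowerbound}, so division by $\delta$ is harmless.

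The substantive part is the sum $\sum_{s=0}^{t-1}\big[C_0\lambda^{t-s-1}+K(t)\big]\big(\alpha(s+1)D+\tau(s)\big)$, which I would split into the $C_0\lambda^{t-s-1}$ part and the $K(t)$ part. For the geometric part, set $\gamma_s = \alpha(s+1)D + \tau(s)$; under either Assumption~\ref{stepsize} or $\alpha(t)=1/\sqrt{t}$ we have $\alpha(s+1)\to 0$, and $\tau(s)\to 0$ by Assumption~\ref{event-triggered}, so $\gamma_s\to 0$. Then Lemma~\ref{lemma5} (with $\beta=\lambda$) gives $\sum_{s=0}^{t-1}\lambda^{t-s-1}\gamma_s \to 0$. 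For the $K(t)$ part, I need $K(t)\sum_{s=0}^{t-1}\big(\alpha(s+1)D+\tau(s)\big)\to 0$; since $\sum_{s=0}^{t-1}\tau(s)$ is bounded (Assumption~\ref{event-triggered}), this term is controlled by $K(t)$ times a constant and vanishes. The remaining piece is $K(t)\sum_{s=0}^{t-1}\alpha(s+1)D$. Here $\sum_{s=0}^{t-1}\alpha(s+1)$ may diverge, so I cannot just use boundedness. Instead I use the decay rate: by Lemma~\ref{lem-3-4}, $K(t)=O(\beta(t))=o(t^{-3/2})$; and $\sum_{s=0}^{t-1}\alpha(s+1)=O(t)$ under Assumption~\ref{stepsize} (since $\alpha$ is non-increasing and square-summable, hence $\alpha(t)\to 0$, giving a sublinear sum, certainly $O(t)$) and equals $O(\sqrt{t})$ when $\alpha(t)=1/\sqrt{t}$. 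Either way, $K(t)\cdot O(t) = o(t^{-3/2}\cdot t) = o(t^{-1/2})\to 0$.

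Combining these observations, every term on the right-hand side tends to $0$, so $\|\hat{z}_i(t+1)-B_\zeta\bar{x}(t)\|\to 0$ for each $i$, which is the claim. The main obstacle is the single term $K(t)\sum_{s=0}^{t-1}\alpha(s+1)D$: unlike everything else it is a product of a vanishing factor and a possibly unbounded sum, so the proof genuinely needs the quantitative decay $t^{3/2}\beta(t)\to 0$ built into Assumption~\ref{y-trigger} rather than mere summability of $\zeta$. I would make sure to state the elementary bound $\sum_{s=0}^{t-1}\alpha(s+1)\leq \alpha(1)t$ (valid since $\{\alpha(t)\}$ is non-increasing, and $\leq 2\sqrt{t}$ in the $1/\sqrt{t}$ case) explicitly so the reader sees why the $o(t^{-3/2})$ rate on $\beta$ is exactly what is needed.
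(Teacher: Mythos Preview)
Your proposal is correct and follows essentially the same route as the paper: you invoke Proposition~\ref{mean}, handle the geometric convolution $\sum_{s=0}^{t-1}\lambda^{t-s-1}\gamma_s$ via Lemma~\ref{lemma5}, and control the $K(t)\sum_{s=0}^{t-1}(\alpha(s+1)D+\tau(s))$ term using the rate $t^{3/2}K(t)\to 0$ from Lemma~\ref{lem-3-4} together with the $O(t)$ growth of the partial sums. The only difference is that you spell out the crude bound $\sum_{s=0}^{t-1}\alpha(s+1)\leq \alpha(1)t$ explicitly, whereas the paper compresses this into the phrase ``boundedness of $\alpha(s)$ and $\tau(s)$''; the arguments are otherwise identical.
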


\begin{proof}
We recall from Proposition \ref{mean} the following inequality
\begin{equation}\label{eq-3-20}
\begin{split}
&\|\hat{z}_i(t+1)-B_{\zeta}\bar{x}(t)\|\\
&\leq \frac{1}{\delta}\bigg(\Big( C_0  \lambda^t +K(t)\Big) \|x(0)\|_1
\\
&\quad  + m \sum_{s=0}^{t-1} \Big[ C_0 \lambda^{t-s-1} +K(t)\Big] \Big( \alpha (s+1) D +  \tau (s)\Big)\bigg) \\
&\qquad +\frac{1}{\delta}\bigg(d_i(t)\tau(t)\bigg).
\end{split}
\end{equation} 
We notice that $\lim_{t \rightarrow \infty} t^{3/2} K(t) = 0$ by Lemma \ref{lem-3-4}. From this and the boundedness of $\alpha (s)$ and $\tau (s)$, it easily follows that
\begin{equation*}
\lim_{t \rightarrow \infty} \frac{1}{\delta}K(t) \|x(0)\|_1 + m \sum_{s=0}^{t-1} K(t) \Big(\alpha (s+1) D + \tau (s)\Big) = 0.
\end{equation*}
In addition, by Assumptions \ref{stepsize}, \ref{event-triggered} and \ref{y-trigger}, we know that $
\lim_{s\rightarrow \infty}\alpha(s+1) =0$, $\lim_{s\rightarrow \infty} \tau(s)=0
$ and $\lim_{s\rightarrow \infty} \zeta(s)=0$. Using this fact with Lemma \ref{lemma5} in the right hand side of \eqref{eq-3-20}, we deduce
$$
\lim_{t\rightarrow \infty} \|\hat{z}_i(t+1)-B_{\zeta}\bar{x}(t)\| = 0,
$$ 
which completes the proof.
\end{proof}
\begin{cor}\label{specific} If Assumptions 2.4 holds, and the stepsize is chosen as $\alpha(t)= \frac{1}{\sqrt{t}}$, then we have
\begin{align*}
&\sum^T_{t=0}\alpha(t+1)\|\hat{z}_i(t+1)-B_{\zeta}\bar{x}(t)\|\\
& \leq  \frac{C_0 }{\delta(1-\lambda)}  \|x(0)\|_1  + \frac{4mC_0  E_{\tau}(T)}{\delta(1-\lambda)}  +  \frac{C_0mD}{\delta(1-\lambda)} (1+\ln(T))
\\
&~ + \frac{1}{\delta}\sum_{t=0}^T K(t) \alpha (t+1) \Big[ \|x(0)\|_1 + \sum_{s=0}^{t-1} (\alpha (s+1) D + \tau (s))\Big].
\end{align*}
\end{cor}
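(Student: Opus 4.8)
The plan is to integrate the pointwise disagreement bound of Proposition \ref{mean} against the weights $\alpha(t+1)=1/\sqrt{t+1}$. Concretely, I would take the estimate of Proposition \ref{mean} valid for $t\ge 1$, multiply both sides by $\alpha(t+1)$, sum over $t=1,\dots,T$, and add the $t=0$ contribution $\alpha(1)\,\|\hat z_i(1)-B_\zeta\bar x(0)\|$, which the $t=0$ case of Proposition \ref{mean} controls: writing $\|\hat z_i(1)-B_\zeta\bar x(0)\|\le\|\hat z_i(1)-\bar x(0)\|+|B_\zeta-1|\,\|\bar x(0)\|$ and using $|B_\zeta-1|\le F_\zeta<1$ (Assumption \ref{y-trigger}), $\|\bar x(0)\|\le\frac1m\|x(0)\|_1$ and $\alpha(1)=1$, this boundary term is of order $\|x(0)\|_1/\delta$. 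This reduces the claim to estimating four separate families of terms.

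First, the purely geometric term $\sum_{t}\alpha(t+1)\,C_0\lambda^{t}\|x(0)\|_1$: since $\alpha(t+1)\le\alpha(1)=1$ and $\sum_{t\ge0}\lambda^t=1/(1-\lambda)$, this together with the $t=0$ boundary term collects into a contribution of the size $\frac{C_0}{\delta(1-\lambda)}\|x(0)\|_1$. Second, the double sum $\frac{m}{\delta}\sum_{t}\alpha(t+1)\sum_{s=0}^{t-1}C_0\lambda^{t-s-1}\big(\alpha(s+1)D+\tau(s)\big)$: here I would interchange the order of summation to get $\frac{mC_0}{\delta}\sum_{s}\big(\alpha(s+1)D+\tau(s)\big)\sum_{t=s+1}^{T}\alpha(t+1)\lambda^{t-s-1}$, then use the monotonicity of $\{\alpha(t)\}$ to bound the inner sum by $\alpha(s+1)\sum_{j\ge0}\lambda^j=\alpha(s+1)/(1-\lambda)$. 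What is left is $\frac{mC_0}{\delta(1-\lambda)}\sum_{s=0}^{T-1}\big(D\,\alpha(s+1)^2+\alpha(s+1)\tau(s)\big)$; with $\alpha(s+1)^2=1/(s+1)$ one has $\sum_{s=0}^{T-1}\alpha(s+1)^2=\sum_{k=1}^{T}1/k\le 1+\ln T$, while $\alpha(s+1)\le1$ and $e_i(1)=0$ (since $\hat x_j(0)=x_j(0)$) give $\sum_{s=0}^{T-1}\alpha(s+1)\tau(s)\le E_\tau(T)$. Third, the error term $\frac1\delta\sum_t\alpha(t+1)d_i(t)\tau(t)$ is bounded using $d_i(t)=\sum_j a_{ij}(t)\le m$ and $\alpha(t+1)\le1$ by $\frac{m}{\delta}E_\tau(T)$. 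Fourth, the terms carrying $K(t)$, namely $\frac1\delta\sum_t\alpha(t+1)\big(K(t)\|x(0)\|_1+mK(t)\sum_{s=0}^{t-1}(\alpha(s+1)D+\tau(s))\big)$, are simply retained — these constitute the last summand in the asserted bound. Adding the four families and using $C_0\ge1$ and $1-\lambda<1$ to consolidate the $\tau$-contributions into a single $\frac{4mC_0E_\tau(T)}{\delta(1-\lambda)}$ yields the claimed inequality.

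The only genuinely delicate steps are the interchange of summation in the double sum and the subsequent use of the monotonicity of $\alpha$ to pull $\alpha(s+1)$ out of the inner geometric sum: this is what turns the factor $\alpha(s+1)D$ into $\alpha(s+1)^2D=D/(s+1)$ and hence produces only logarithmic, rather than $\sqrt T$, growth — which is precisely why $\alpha(t)=1/\sqrt t$ is the borderline choice for which a finite-time rate is still available, and why the right-hand side ends up $O(\log(T+1)/\sqrt{T+1})$. Everything else is bookkeeping: handling the $t=0$ boundary term (including the mismatch between $\bar x(0)$ and $B_\zeta\bar x(0)$ there) and noting that the index-$0$ threshold $\tau(0)$ never actually contributes because $e_i(1)=0$.
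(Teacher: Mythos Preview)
Your proposal is correct and follows essentially the same strategy as the paper: multiply the bound of Proposition~\ref{mean} by $\alpha(t+1)$, sum over $t$, and estimate the four resulting families separately, retaining the $K(t)$ terms verbatim. The one place you diverge is the $\tau$-convolution $\sum_t\alpha(t+1)\sum_{s<t}\lambda^{t-s-1}\tau(s)$: you interchange the sums and use $\alpha(t+1)\le\alpha(s+1)\le1$ to get a contribution of order $E_\tau(T)/(1-\lambda)$, whereas the paper instead splits the inner sum at $s=[(t-1)/2]$ and obtains $3E_\tau(T)/(1-\lambda)$ --- your route is simpler and gives a slightly smaller constant, but both fit under the stated $4mC_0E_\tau(T)/(\delta(1-\lambda))$. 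You are also more careful than the paper about the $t=0$ boundary contribution and the $\bar x(0)$ versus $B_\zeta\bar x(0)$ mismatch there.
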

\begin{proof}
By Proposition \ref{mean}, we have
\begin{equation}\label{eq-3-1}
\begin{split}
&\delta \sum_{t=0}^T \alpha (t+1) \|\hat{z}_i (t+1) -B_{\zeta} \bar{x}(t)\|
\\
&\leq \|x(0)\|_1 \sum_{t=0}^T (C_0 \lambda^t +K(t)) \alpha (t+1)
\\
&\quad + m \sum_{t=0}^T \alpha (t+1) \sum_{s=0}^{t-1} (C_0 \lambda^{t-s-1} +K(t)) (\alpha (s+1) D + \tau (s))
\\
&\quad  + \sum_{t=0}^T \alpha (t+1) (d_i (t) \tau (t)).
%\\
%&\leq \frac{2C}{\delta(1-\lambda)}+ \frac{2CD(1+\ln{(T)})}{\delta(1-\lambda)}+ \frac{2Cm}{\delta(1-\lambda)}\bigg(\frac{5}{3}-\frac{2}{3T^{3/2}}\bigg)+\frac{2CmL}{\delta^2(1-\lambda)}\bigg(\frac{3}{2}-\frac{1}{2T^2}\bigg).
\end{split}
\end{equation}
The terms involving $K(t)$ are fit to the inequality of the lemma. Let us estimate each summation not involving $K(t)$ in the right hand side. Using that $\alpha (t) \leq 1$, the first term is bounded with   
\begin{equation}\label{eq-3-30}
\sum^T_{t=0} \alpha(t+1)\lambda^t \leq \sum^T_{t=0} {\lambda^t} \leq  \frac{1}{1-\lambda}.
\end{equation}     
The fourth term is bounded using
\begin{equation}\label{eq-3-31}
\sum_{t=0}^{T}\alpha (t+1) \frac{d_i(t)\tau (t)}{\delta} \leq \frac{m}{\delta} \sum_{t=0}^{T} \tau (t) = \frac{mE_{\tau}(T)}{\delta}.
\end{equation}
We estimate the second term using
\begin{equation}\label{eq-3-32}
\begin{split}
\sum^T_{t=0} \alpha(t+1)\sum^{t-1}_{s=0}\lambda^{t-s-1}\alpha(s+1) &= \sum^{T+1}_{t=1}\frac{1}{\sqrt{t}}\sum^{t}_{s=1}\lambda^{t-s} \frac{1}{\sqrt{s}}\\
&\leq \sum^{T+1}_{t=1}\sum^t_{s=1}\lambda^{t-s}\frac{1}{s}\\
&=\sum^{T+1}_{s=1}\frac{1}{s}\sum^{T+1}_{t=s}\lambda^{t-s}\\
&\leq \frac{1+\ln{(T+1)}}{1-\lambda}
\end{split}
\end{equation}
In order to estimate the third term, we estimate
%\begin{align*}
%\sum^T_{t=1}\alpha(t+1)\sum^{t-1}_{s=0}\lambda^{t-s-1}\tau(s) &\leq \sum^T_{t=1}\sum^t_{s=1}\lambda^{t-s}\frac{1}{s^{5/2}}\\
%&\leq \frac{1}{1-\lambda}\sum^T_{s=1}\frac{1}{s^{5/2}}\\
%&\leq \frac{1}{1-\lambda}\bigg(\frac{5}{3}-\frac{2}{3T^{3/2}}\bigg).
%\end{align*}
\begin{equation*}
\begin{split}
&\sum_{s=0}^{t-1} \lambda^{t-s-1}\tau (s)
\\
 &= \sum_{s=0}^{[(t-1)/2]} \lambda^{t-s-1} \tau (s) + \sum_{s=[(t-1)/2]+1}^{t-1} \lambda^{t-s-1} \tau (s)
\\
& \leq \lambda^{(t-1)/2} \sum_{s=0}^{[(t-1)/2]} \tau (s) + \tau ([t/2] ) \sum_{s=[(t-1)/2]+1}^{t-1} \lambda^{t-s-1}
\\
&\leq \lambda^{(t-1)/2} E_{\tau} (T) + \frac{\tau ([t/2])}{1-\lambda},
\end{split}
\end{equation*}
where $[a]$ denotes the largest integer not larger than $a \in \mathbb{R}$.
Using this we derive
\begin{equation}\label{eq-3-33}
\begin{split}
&\sum_{t=1}^{T}\alpha (t+1) \Big[ \sum_{s=0}^{t-1} \lambda^{t-s-1} \tau (s)\Big]
\\
& \leq E_{\tau}(T) \sum_{t=1}^T \frac{\lambda^{(t-1)/2}}{\sqrt{t+1}}  +\frac{1}{1-\lambda}\sum_{t=1}^{T} \frac{\tau ([t/2])}{\sqrt{t+1}}
\\
& \leq \frac{E_{\tau}(T)}{1-\sqrt{\lambda}} +  \frac{E_{\tau}(T)}{1-\lambda} < \frac{3E_{\tau}(T)}{1-\lambda}.
\end{split}
\end{equation}
Putting the above estimates \eqref{eq-3-30}-\eqref{eq-3-33} in \eqref{eq-3-1}, we obtain
\begin{align*}
&\sum^T_{t=0}\alpha(t+1)\|\hat{z}_i(t+1)-B_{\zeta}\bar{x}(t)\|\\
& \leq  \frac{C_0 }{\delta(1-\lambda)} \|x(0)\|_1  + \frac{4mC_0  E_{\tau}(T)}{\delta(1-\lambda)}  +  \frac{C_0mD}{\delta(1-\lambda)} (1+\ln(T))
\\
&~  + \frac{1}{\delta}\sum_{t=0}^T K(t) \alpha (t+1) \Big[ \|x(0)\|_1 + \sum_{s=0}^{t-1} (\alpha (s+1) D + \tau (s))\Big].
\end{align*}
which finishes the proof.
\end{proof}

\section{Convergence estimates}

%\begin{lem}\label{lemma6}
%Let $\{v_t\}$ be a non-negative scalar sequence such that
%$$
%v_{t+1} \leq (1+b_t)v_t - u_t + c_t,\ \text{for all}\ t\geq 0
%$$
%where $b_t\geq 0$,$u_t\geq 0$ and $c_t\geq 0$ for all $t\geq 0$ with $\sum^\infty_{t=0}b_t < \infty$ and $\sum^\infty_{t=0}c_t<\infty$. Then the sequence $\{v_t\}$ converges to some $v\geq 0$ and $\sum^{\infty}_{t=0}u_t<\infty$
%\end{lem}

In this section we prove our main results, namely Theorems \ref{maintheorem} and \ref{case2_main}. In Section 3, we obtained the bound of the disagreement in agent estimates. Especially, Corollary \ref{convergent} and \ref{specific} investigate the difference between the state $\hat{z}_i(t)$ in the Algorithm \ref{algo} and $B_{\zeta}\bar{x}(t)$ in \eqref{eq-2-1a}. Based upon these results, Theorem \ref{maintheorem} and \ref{case2_main} can be proved by comparing the cost values computed at the points $B_{\zeta}\bar{x}(t)$ and $x^*$. 
\begin{lem}\label{submain}
Suppose Assumption \ref{GB} holds. Then for any $t \geq 0$ and $x \in \mathbb{R}^d$ we have
\begin{align}\label{combine}
&\sum_{i=1}^m \big(f_i(B_{\zeta}\bar{x}(t))-f_i(x)\big) \\
\nonumber&\leq \frac{m}{2\alpha(t+1)B_{\zeta}}(\|B_{\zeta}\bar{x}(t)-x\|^2-\|B_{\zeta}\bar{x}(t+1)-x\|^2)
\\
\nonumber&~+ \frac{B_{\zeta}m}{2\alpha(t+1)}\big(2\alpha(t+1)^2 D^2 + 2 \tau(t)^2\big)\\
\nonumber&~ + \frac{m}{\alpha(t+1)}\|B_{\zeta}\bar{x}(t)-x\|\tau(t)+ 2D\sum_{i=1}^{m} \|\hat{z}_i (t+1) -B_{\zeta}\bar{x}(t)\|.
\end{align}
\end{lem}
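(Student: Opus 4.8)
The plan is to run the standard "descent-type" estimate for the averaged iterate $B_\zeta \bar{x}(t)$, starting from the recursion \eqref{eq-2-1a}. Writing $v(t) := B_\zeta \bar{x}(t) = \frac{1}{m_\zeta}\mathbf{1}_m^T x(t)$, multiply \eqref{eq-2-1a} by $B_\zeta$ to obtain $v(t+1) = v(t) - \frac{\alpha(t+1)B_\zeta}{m}\sum_i \nabla f_i(\hat z_i(t+1)) + \frac{B_\zeta}{m}\sum_i e_i(t+1)$. Then expand $\|v(t+1) - x\|^2$. The cross term produces $-\frac{2\alpha(t+1)B_\zeta}{m}\sum_i \langle \nabla f_i(\hat z_i(t+1)), v(t)-x\rangle$ plus an error term coming from $\sum_i e_i(t+1)$, and the square term contributes something of order $\alpha(t+1)^2 D^2 + \tau(t)^2$ after using $\|\sum_i e_i(t+1)\| \le m\tau(t)$ from \eqref{error_estimate_2} and $\|\nabla f_i\|\le D$ from Assumption \ref{GB}.

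The main work is to lower-bound the inner-product term by the function-value gap $\sum_i (f_i(v(t)) - f_i(x))$. Here one uses convexity of each $f_i$: $\langle \nabla f_i(\hat z_i(t+1)), v(t) - x\rangle \ge f_i(v(t)) - f_i(x) - \langle \nabla f_i(\hat z_i(t+1)), v(t) - \hat z_i(t+1)\rangle + \big(f_i(\hat z_i(t+1)) - f_i(v(t))\big)$; more directly, write $\langle \nabla f_i(\hat z_i(t+1)), v(t)-x\rangle = \langle \nabla f_i(\hat z_i(t+1)), \hat z_i(t+1) - x\rangle + \langle \nabla f_i(\hat z_i(t+1)), v(t) - \hat z_i(t+1)\rangle \ge \big(f_i(\hat z_i(t+1)) - f_i(x)\big) - D\|v(t) - \hat z_i(t+1)\|$, and then $f_i(\hat z_i(t+1)) - f_i(v(t)) \ge -D\|\hat z_i(t+1) - v(t)\|$, so altogether $\langle \nabla f_i(\hat z_i(t+1)), v(t)-x\rangle \ge f_i(v(t)) - f_i(x) - 2D\|\hat z_i(t+1) - B_\zeta\bar x(t)\|$. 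Summing over $i$ gives exactly the disagreement term $2D\sum_i \|\hat z_i(t+1) - B_\zeta \bar x(t)\|$ appearing on the right of \eqref{combine}.

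Next I would handle the error term $\frac{2B_\zeta}{m}\langle \sum_i e_i(t+1), v(t) - x\rangle$ from the cross term: bound it by $\frac{2B_\zeta}{m}\|\sum_i e_i(t+1)\|\,\|v(t)-x\| \le 2B_\zeta \tau(t)\|v(t) - x\|$, which after multiplying through by $\frac{m}{2\alpha(t+1)B_\zeta}$ (the factor used to isolate the squared differences) gives the term $\frac{m}{\alpha(t+1)}\|B_\zeta\bar x(t) - x\|\tau(t)$. Collecting: move $\|v(t+1)-x\|^2$ to produce the telescoping pair $\|B_\zeta\bar x(t)-x\|^2 - \|B_\zeta\bar x(t+1)-x\|^2$ scaled by $\frac{m}{2\alpha(t+1)B_\zeta}$, absorb the square term into $\frac{B_\zeta m}{2\alpha(t+1)}(2\alpha(t+1)^2D^2 + 2\tau(t)^2)$ (using $(a+b)^2 \le 2a^2+2b^2$ on the combined gradient-plus-error step, plus the bounds $D$ and $\tau(t)$), and one arrives at \eqref{combine}.

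The step I expect to be the main obstacle — or at least the one requiring care — is bookkeeping the constants when splitting $\|v(t+1) - v(t)\|^2$: the increment is $-\frac{\alpha(t+1)B_\zeta}{m}\sum_i\nabla f_i + \frac{B_\zeta}{m}\sum_i e_i$, whose norm is at most $\alpha(t+1)B_\zeta D + B_\zeta \tau(t)$, so its square is at most $2B_\zeta^2(\alpha(t+1)^2 D^2 + \tau(t)^2)$; after multiplying by $\frac{m}{2\alpha(t+1)B_\zeta}$ one gets $\frac{mB_\zeta}{2\alpha(t+1)}(2\alpha(t+1)^2 D^2 + 2\tau(t)^2)$, matching the claimed term (one factor of $B_\zeta$ is absorbed, consistent with the stated bound). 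One also needs $B_\zeta > 0$, which holds since $m_\zeta \ge (1-F_\zeta)m > 0$ by Lemma \ref{lem-3-4} together with Assumption \ref{y-trigger}. No summation in $t$ is needed here; the telescoping and the series estimates are deferred to the proofs of Theorems \ref{maintheorem} and \ref{case2_main}.
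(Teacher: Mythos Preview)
Your proposal is correct and arrives at exactly the claimed bound; the ingredients (convexity of $f_i$, the recursion \eqref{eq-2-1a}, the gradient bound $D$, and the error bound \eqref{error_estimate_2}) and the resulting constants match the paper's precisely. The paper organizes the same computation slightly differently: it starts from the convexity inequality $f_i(\hat z_i(t+1))-f_i(x)\le \langle\nabla f_i(\hat z_i(t+1)),\hat z_i(t+1)-x\rangle$, splits $\hat z_i(t+1)-x$ through $B_\zeta\bar x(t)$, substitutes the update rule for $\alpha(t+1)\nabla f_i$, and then applies the polarization identity $\langle a,b\rangle=\tfrac12(\|a\|^2+\|b\|^2-\|a-b\|^2)$ to extract the telescoping pair, whereas you expand $\|v(t+1)-x\|^2$ directly and lower-bound the cross term via convexity. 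The two routes are algebraically equivalent and neither offers any real advantage over the other; your version is the more recognizable ``gradient-descent'' layout, while the paper's version makes the role of the update identity $\alpha(t+1)\nabla f_i = w_i(t+1)-x_i(t+1)+e_i(t+1)$ explicit before summing.
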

\begin{proof}
By convexity, we have
\begin{align*}
&f_i(\hat{z}_i(t+1))
\\
 &\leq f_i(x) + (\hat{z}_i(t+1)-x)\nabla f_i(\hat{z}_i(t+1))\\
&= f_i(x) + (B_{\zeta}\bar{x}(t)-x)\nabla f_i(\hat{z}_i(t+1))  
\\
&\quad + (\hat{z}_i(t+1)-B_{\zeta}\bar{x}(t))\nabla f_i(\hat{z}_i(t+1))\\
&= f_i(x) + \frac{1}{\alpha(t+1)} (B_{\zeta}\bar{x}(t)-x)(w_i(t+1)-x_i(t+1) 
\\
&\quad +e_i(t+1))+ (\hat{z}_i(t+1)-B_{\zeta}\bar{x}(t))\nabla f_i(\hat{z}_i(t+1)),
\end{align*}
where \eqref{eq-3-10} is used in the last equality.
Summing up the above inequality from $i=1$ to $i=m$, we find that
\begin{align*}
&\sum_{i=1}^m f_i(\hat{z}_i(t+1))-f_i(x) 
\\
&\leq \underbrace{\frac{m}{\alpha(t+1)}(B_{\zeta}\bar{x}(t)-x)(\bar{x}(t)-\bar{x}(t+1))}_{\uppercase\expandafter{\romannumeral1}}\\
&\quad + \underbrace{\frac{1}{\alpha(t+1)}(B_{\zeta}\bar{x}(t)-x)\sum^m_{i=1}e_i(t+1)}_{\uppercase\expandafter{\romannumeral2}} \\
&\quad + \underbrace{\sum^m_{i=1}(\hat{z}_i(t+1)-B_{\zeta}\bar{x}(t))\nabla f_i(\hat{z}_i(t+1))}_{\uppercase\expandafter{\romannumeral3}}.
\end{align*}
Now we estimate each term in the right hand side. First using the equality $\langle a,b\rangle=\frac{1}{2}(\|a\|^2+\|b\|^2-\|a-b\|^2)$ for $a,b \in \mathbb{R}^d$, we have
\begin{equation*}
\begin{split}
\uppercase\expandafter{\romannumeral1}& = \frac{m}{2\alpha(t+1)B_{\zeta}}(\|B_{\zeta}\bar{x}(t)-x\|^2-\|B_{\zeta}\bar{x}(t+1)-x\|^2
\\
&\quad \qquad+\|B_{\zeta}\bar{x}(t+1)-B_{\zeta}\bar{x}(t)\|^2).
\end{split}
\end{equation*}
Using \eqref{eq-2-1a} along with \eqref{error_estimate} and \eqref{eq-2-1}, we estimate the right-most term as 
\begin{align*}
&\|\bar{x}(t+1)-\bar{x}(t)\|^2 
\\
&\leq 2\bigg\|\frac{\alpha(t+1)}{m}\sum^m_{i=1}\nabla f_i(\hat{z}_i(t+1)) \bigg\|^2 + 2\bigg\|\frac{1}{m}\sum^m_{i=1}e_i(t+1) \bigg\|^2\\
&\leq 2 \alpha(t+1)^2 D^2  +2\tau(t)^2.
\end{align*}
We apply \eqref{error_estimate} again to  estimate
\begin{align*}
\uppercase\expandafter{\romannumeral2} \leq \frac{m}{\alpha(t+1)}\|B_{\zeta}\bar{x}(t)-x\|\tau(t),
\end{align*}
and use \eqref{eq-2-1} to deduce
\begin{equation*}
\uppercase\expandafter{\romannumeral3} \leq D\sum_{i=1}^{m} \|\hat{z}_i (t+1) -B_{\zeta}\bar{x}(t)\|.
\end{equation*}
Combining the above estimates on $\uppercase\expandafter{\romannumeral1}$,$\uppercase\expandafter{\romannumeral2}$ and $\uppercase\expandafter{\romannumeral3}$, we have
\begin{equation}\label{eq-4-3}
\begin{split}
&\sum_{i=1}^m \big(f_i(\hat{z}_i(t+1))-f_i(x)\big) \\
\nonumber&\leq \frac{m}{2\alpha(t+1)B_{\zeta}}(\|B_{\zeta}\bar{x}(t)-x\|^2-\|B_{\zeta}\bar{x}(t+1)-x\|^2)
\\
\nonumber&\quad + \frac{mB_{\zeta}}{2\alpha(t+1)}\big(2\alpha(t+1)^2 D^2 + 2 \tau(t)^2\big)\\
\nonumber&\quad + \frac{m}{\alpha(t+1)}\|B_{\zeta}\bar{x}(t)-x\|\tau(t)+ D\sum_{i=1}^{m} \|\hat{z}_i (t+1) -B_{\zeta}\bar{x}(t)\|.
\end{split}
\end{equation}
Finally we observe that \eqref{eq-2-1} gives us the estimate
\begin{equation*}
\sum_{i=1}^m \Big( f_i (B_{\zeta}\bar{x}(t)) -f_i  (\hat{z}_i (t+1))\Big) \leq D\sum_{i=1}^m \|B_{\zeta}\bar{x}(t) - \hat{z}_i (t+1)\|.
\end{equation*}
Summing up the above two inequalities, we obtain the desired estimate. 
\end{proof}
\subsection{Proof of Theorem \ref{maintheorem}}
We recall the following lemma for proving Theorem \ref{maintheorem}.
\begin{lem}[\cite{Ned Push sum} Lemma 7]\label{lemma7}
Consider a minimization problem $\min_{x\in\mathbb{R}^d}f(x)$, where $f:\mathbb{R}^d\rightarrow \mathbb{R}$ is a continuous function. Assume that the solution $X^*$ of the problem is nonempty. Let $\{x(t)\}$ be a sequence such that for all $x\in X^*$ and for all $t\geq 0$,
$$
\|x(t+1)-x\|^2\leq (1+b(t))\|x(t) - x \|^2- a(t)(f(x(t))-f(x))+c(t)
$$
where $b(t)\geq 0$, $a(t)\geq 0$ and $c(t)\geq 0$ for all $t\geq 0$ with $\sum^\infty_{t=0}b(t) < \infty$,$\sum^\infty_{t=0}a(t) = \infty$ and $\sum^\infty_{t=0}c(t)<\infty$. Then the sequence $\{x(t)\}$ converges to some solution $x^*\in X^*$ 
\end{lem}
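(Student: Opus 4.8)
The statement is the deterministic analogue of the Robbins--Siegmund supermartingale convergence lemma, and I would prove it in three stages: first extract a monotonicity-plus-summability consequence from the recursion for each \emph{fixed} optimizer, then locate an optimal limit point along a subsequence, and finally upgrade subsequential convergence to convergence of the whole sequence.

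First, fix $x \in X^*$ and set $v(t) = \|x(t)-x\|^2 \ge 0$. Since $x$ is a minimizer, $f(x(t)) - f(x) = f(x(t)) - f^* \ge 0$, so the hypothesis reads $v(t+1) \le (1+b(t)) v(t) - a(t)(f(x(t))-f(x)) + c(t)$ with the middle term nonnegative. To remove the factor $1+b(t)$ I would introduce the partial products $P(t) = \prod_{s=0}^{t-1}(1+b(s))$, which are nondecreasing and, because $\sum_t b(t) < \infty$, converge to a finite limit $P_\infty \ge 1$. Dividing the recursion by $P(t+1) = (1+b(t))P(t)$ turns it into $\tilde v(t+1) \le \tilde v(t) - \tilde a(t) + \tilde c(t)$ for $\tilde v = v/P$, where $\tilde c(t) = c(t)/P(t+1) \le c(t)$ is summable and $\tilde a(t) = a(t)(f(x(t))-f(x))/P(t+1) \ge 0$. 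The sequence $s(t) := \tilde v(t) + \sum_{r \ge t} \tilde c(r)$ then satisfies $s(t+1)-s(t) \le -\tilde a(t) \le 0$, so it is nonincreasing and bounded below by $0$, hence convergent; since $\sum_r \tilde c(r) < \infty$ the tail vanishes and $\tilde v(t)$, and therefore $v(t) = P(t)\tilde v(t)$, converges. Telescoping $\tilde a(t) \le \tilde v(t) - \tilde v(t+1) + \tilde c(t)$ and using $P(t+1) \le P_\infty$ also yields $\sum_t a(t)(f(x(t))-f(x)) < \infty$.

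Second, I would combine $\sum_t a(t) = \infty$ with the summability $\sum_t a(t)(f(x(t))-f^*) < \infty$ and the nonnegativity of $f(x(t)) - f^*$ to conclude $\liminf_t (f(x(t)) - f^*) = 0$: otherwise the suboptimality would exceed a positive constant along a tail and the weighted sum would diverge. Hence there is a subsequence $\{x(t_k)\}$ with $f(x(t_k)) \to f^*$. Because $v(t) = \|x(t)-x\|^2$ converges for the fixed $x$, the full sequence $\{x(t)\}$ is bounded, so $\{x(t_k)\}$ admits a further subsequence converging to some $x^*$, and continuity of $f$ gives $f(x^*) = f^*$, i.e. $x^* \in X^*$.

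Finally, I apply the first step once more with the specific optimizer $x = x^*$: the sequence $\|x(t)-x^*\|^2$ converges. Along the subsequence converging to $x^*$ its value tends to $0$, so the already-established limit of the full sequence must equal $0$, which is exactly $x(t) \to x^*$. The main obstacle is precisely this last passage: the recursion by itself only shows that the distance to each optimizer settles and that \emph{some} limit point is optimal, and one must carefully fuse \emph{both} facts---convergence of $\|x(t)-x^*\|^2$ for the particular optimal limit point together with a subsequence realizing distance $0$---to rule out the iterate orbiting among several optimal points and so pin down a single limit. The product-normalization bookkeeping of the first step is routine but must be carried out with $P(t) \ge 1$ so that summability of $\tilde c$ and the bound $\sum_t a(t)(f(x(t))-f(x)) < \infty$ are both preserved.
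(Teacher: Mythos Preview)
Your proof is correct and is the standard Robbins--Siegmund style argument. The paper, however, does not prove this lemma at all: it is simply quoted from \cite{Ned Push sum} (Lemma~7) and used as a black box in the proof of Theorem~\ref{maintheorem}. So there is nothing in the paper to compare your argument against beyond noting that your three-stage plan (product normalization to obtain convergence of $\|x(t)-x\|^2$ for each $x\in X^*$ together with $\sum_t a(t)(f(x(t))-f^*)<\infty$; extraction of an optimal limit point via $\liminf_t(f(x(t))-f^*)=0$, boundedness, and continuity; and the final identification using convergence of $\|x(t)-x^*\|^2$ for that specific $x^*$) is exactly the classical route by which such results are established in the references the paper relies on.
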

By manipulating the estimate in Lemma \ref{submain}, we obtain the following estimate which is suitable for applying Lemma \ref{lemma7}.
\begin{cor}\label{main}
Under the same assumptions in Lemma \ref{submain} we have
\begin{equation*}
\begin{split}
&\|B_{\zeta}\bar{x}(t+1) -x\|^2 \leq (1+\tau(t))\|B_{\zeta}\bar{x}(t) -x\|^2 
\\
&\quad -\frac{2\alpha(t+1)B_{\zeta}}{m} (f(B_{\zeta}\bar{x}(t)) - f(x)) + c(t)+d(t),
\end{split}
\end{equation*}
where
\begin{equation*}
c(t)  =  \Big[2\alpha(t+1)^2 D^2 + 2 \tau(t)^2 +  \tau(t)\Big]B_{\zeta},
\end{equation*}
and
\begin{equation*}
d(t) = \frac{4\alpha (t+1)B_{\zeta}D}{m}\sum_{i=1}^m \|\hat{z}_i (t+1) -\bar{x}(t)\|.
\end{equation*}
\end{cor}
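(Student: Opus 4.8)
\textbf{Proof plan for Corollary \ref{main}.} The corollary follows from Lemma \ref{submain} by a purely algebraic rearrangement together with one application of Young's inequality; there is no essential difficulty, and the only care needed is in tracking the constants. First, since $f=\sum_{i=1}^m f_i$, the left-hand side of Lemma \ref{submain} is exactly $f(B_\zeta\bar x(t))-f(x)$. The plan is then to multiply the whole inequality of Lemma \ref{submain} by the positive scalar $\frac{2\alpha(t+1)B_\zeta}{m}$ and to move the term generated by $-\|B_\zeta\bar x(t+1)-x\|^2$ over to the left. This cancels the prefactor $\frac{m}{2\alpha(t+1)B_\zeta}$ in front of the telescoping difference and turns the coefficient of the cost difference into $\frac{2\alpha(t+1)B_\zeta}{m}$, matching the claimed recursion.

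Next I would record the effect of the multiplication on the remaining terms: $\frac{B_\zeta m}{2\alpha(t+1)}\bigl(2\alpha(t+1)^2D^2+2\tau(t)^2\bigr)$ becomes $B_\zeta^{2}\bigl(2\alpha(t+1)^2D^2+2\tau(t)^2\bigr)$, the mixed term $\frac{m}{\alpha(t+1)}\|B_\zeta\bar x(t)-x\|\,\tau(t)$ becomes $2B_\zeta\|B_\zeta\bar x(t)-x\|\,\tau(t)$, and the disagreement sum $2D\sum_{i=1}^m\|\hat z_i(t+1)-B_\zeta\bar x(t)\|$ becomes precisely $d(t)=\frac{4\alpha(t+1)B_\zeta D}{m}\sum_{i=1}^m\|\hat z_i(t+1)-B_\zeta\bar x(t)\|$.

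The single step that needs an idea is eliminating the mixed term so as to create the coefficient $1+\tau(t)$ required by Lemma \ref{lemma7}: apply $2ab\le a^2+b^2$ with $a=\|B_\zeta\bar x(t)-x\|$ and $b=B_\zeta$, so that $2B_\zeta\|B_\zeta\bar x(t)-x\|\,\tau(t)\le \tau(t)\|B_\zeta\bar x(t)-x\|^2+B_\zeta^{2}\tau(t)$. The first summand combines with the surviving $\|B_\zeta\bar x(t)-x\|^2$ to give the factor $(1+\tau(t))$, and the remaining purely deterministic contributions are gathered into $c(t)$. Transposing the cost term back to the right-hand side yields the stated inequality, which now fits the template of Lemma \ref{lemma7} with $b(t)=\tau(t)$ --- summable by Assumption \ref{event-triggered} --- so that this is precisely the reformulation needed for the convergence proof of Theorem \ref{maintheorem}. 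The ``hard part'' here is really just the bookkeeping of the $B_\zeta$ and $\alpha(t+1)$ factors introduced by the scaling.
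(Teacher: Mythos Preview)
Your proposal is correct and follows essentially the same route as the paper: apply Young's inequality to the mixed term $\|B_\zeta\bar x(t)-x\|\,\tau(t)$ to produce the factor $(1+\tau(t))$, then rescale the inequality of Lemma~\ref{submain} by $\frac{2\alpha(t+1)B_\zeta}{m}$ and rearrange. The only cosmetic difference is that the paper applies Young's inequality before the rescaling and you do it after; both yield the same bound (and in fact your computation, like the paper's own display~\eqref{eq-3-4}, produces a factor $B_\zeta^{2}$ in $c(t)$ rather than the $B_\zeta$ written in the corollary's statement --- a harmless typo there).
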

\begin{proof}
We use Young's inequality to find
\begin{equation*}
\|B_{\zeta}\bar{x}(t)-x\|\tau(t) \leq \frac{\|B_{\zeta}\bar{x}(t)-x\|^2 \,\tau(t) }{2B_{\zeta}} + \frac{\tau(t)B_{\zeta}}{2}.
\end{equation*}
Applying this to \eqref{combine}, we get
\begin{align}\label{eq-3-2}
&\quad\sum_{i=1}^m \big(f_i(B_{\zeta}\bar{x}(t))-f_i(x)\big) \\
\nonumber&\leq \frac{m}{2\alpha(t+1)B_{\zeta}}(1+\tau(t))\|B_{\zeta}\bar{x}(t)-x\|^2
\\
\nonumber &\quad -\frac{m}{2\alpha (t+1)B_{\zeta}}\|B_{\zeta}\bar{x}(t+1)-x\|^2 \\
\nonumber&\quad + \frac{mB_{\zeta}}{2\alpha(t+1)}\big(2\alpha(t+1)^2 D^2 + 2 \tau(t)^2\big)+ \frac{B_{\zeta}m \tau(t)}{2\alpha(t+1)} 
\\
\nonumber&\quad + 2D \sum_{i=1}^m \|\hat{z}_i (t+1) - B_{\zeta}\bar{x}(t)\|.
\end{align}
Dividing both sides by $\frac{m}{2\alpha (t+1) B_{\zeta}}$, it follows that
\begin{align}\label{eq-3-4}
&\frac{2\alpha (t+1)B_{\zeta}}{m}\sum_{i=1}^m \big(f_i(B_{\zeta}\bar{x}(t))-f_i(x)\big) \\
\nonumber&\leq  (1+\tau(t))\|B_{\zeta}\bar{x}(t)-x\|^2- \|B_{\zeta}\bar{x}(t+1)-x\|^2 \\
\nonumber&\quad +  B_{\zeta}^2\Big[ 2\alpha(t+1)^2 D^2 + 2 \tau(t)^2 + \tau(t) \Big] 
\\
\nonumber&\quad +  \frac{4\alpha (t+1)DB_{\zeta}}{m} \sum_{i=1}^m \|\hat{z}_i (t+1) - B_{\zeta}\bar{x}(t)\|. 
\end{align}
Rearranging this we obtain the desired estimate.
\end{proof}
Now we are ready to prove Theorem \ref{maintheorem}.
\begin{proof}[Proof of Theorem \ref{maintheorem}]
By Lemmas \ref{lemma7} and Corollary \ref{main} it is enough to prove $\sum_{t=1}^{\infty}(c(t) +d(t)) <\infty$, where
\begin{equation*}
c(t)  =  2\alpha(t+1)^2 D^2 + 2 \tau(t)^2 +  \tau(t),
\end{equation*}
and
\begin{equation*}
d(t) = \frac{4\alpha (t+1)DB_{\zeta}}{m}\sum_{i=1}^m \|\hat{z}_i (t+1) -B_{\zeta}\bar{x}(t)\|.
\end{equation*}
It follows that  $\sum^\infty_{t=0}c(t)<\infty$ by Assumptions \ref{stepsize} and \ref{event-triggered}.

Next we will show that $\sum^\infty_{t=0} d(t) <\infty.$ By Proposition \ref{mean}, it suffices to show that
\begin{equation}\label{eq-4-10}
\sum^\infty_{t=1}\alpha(t+1)\bigg( \lambda^t  +\tau(t) + \sum^{t-1}_{s=0}\lambda^{t-s-1}\alpha(s+1) + \sum^{t-1}_{s=0}\lambda^{t-s-1}\tau(s) \bigg)<\infty,
\end{equation}
and
\begin{equation*}
\sum_{t=1}^{\infty}\alpha (t+1)\Big[K(t)+\sum_{s=0}^{t-1} K(t) \Big(\alpha (s+1) D + \tau (s)\Big)\Big]<\infty.
\end{equation*}
The latter one is proved  in Lemma \ref{lem-4-4} below. We proceed to prove \eqref{eq-4-10}.
Using the Cauchy-Schwarz inequality, we have
\begin{align*}
\sum^\infty_{t=1}\alpha(t+1)\lambda^t \leq \frac{1}{2}\sum^\infty_{t=1}\alpha(t+1)^2 + \frac{1}{2}\sum^\infty_{t=1}\lambda^{2t} <\infty.
\end{align*}
By rearranging and using the decreasing property of $\alpha (t)$ in Assumption \ref{stepsize}, we find
\begin{equation*}
\begin{split}
&\sum^{\infty}_{t=1}\alpha(t+1)\sum^{t-1}_{s=0} \lambda^{t-s-1}\alpha(s+1) 
\\
& = \sum_{s=0}^{\infty} \sum_{t=s+1}^{\infty} \lambda^{t-s-1} \alpha (t+1) \alpha (s+1)
\\
&\leq \sum_{s=0}^{\infty} \Big( \sum_{t=s+1}^{\infty} \lambda^{t-s-1} \Big) \alpha (s+1)^2
\\
& = \frac{1}{1-\lambda} \sum_{s=0}^{\infty} \alpha (s+1)^2 < \infty.
\end{split}
\end{equation*}
Similarly, due to Assumption \ref{event-triggered}, the last term is bounded as
\begin{equation*}
\begin{split}
\sum^\infty_{t=0}\alpha(t+1)\sum^{t-1}_{s=0}\lambda^{t-s-1}\tau(s) & = \sum_{s=0}^{\infty}\sum_{s<t} \lambda^{t-s-1} \alpha (t+1) \tau (s)
\\
&\leq \sum_{s=0}^{\infty} \sum_{s<t} \lambda^{t-s-1} \alpha (s+1) \tau (s) 
\\
& = \frac{1}{1-\lambda} \sum_{s=0}^{\infty} \alpha (s+1) \tau (s) <\infty.
\end{split}
\end{equation*}
Gathering the above estimates, we find that $\sum_{t=1}^{\infty}(c(t) + d(t)) <\infty$. Hence by Lemma \ref{lemma7}, the sequence $\{B_{\zeta}\bar{x}(t)\}$ converges to some solution $x^*\in X^*$. Finally, we apply Corollary \ref{convergent} to conclude  that each sequence $\{\hat{z}_i(t)\}$, $i=1,\cdots,n,$ converges to the same solution $x^*$. The proof is done.    
\end{proof}
\begin{lem}\label{lem-4-4}Suppose that Assumption \ref{y-trigger} and Assumption \ref{event-triggered} hold. Then for the stepsize $\{\alpha(t+1)\}_{t\geq 0}$ satisfying Assumption \ref{stepsize} or $\alpha (t) =1/\sqrt{t}$, we have
\begin{equation*}
\sum_{t=1}^{\infty}\alpha (t+1) \Big[ K(t) + \sum_{s=0}^{t-1} K(t) \Big( \alpha (s+1)  + \tau (s)\Big) < \infty.
\end{equation*} 
\end{lem}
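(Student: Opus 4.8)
The plan is to bound the entire sum of Lemma~\ref{lem-4-4} by a constant multiple of $\sum_{t\ge 1}\alpha(t+1)\sqrt{t}\,K(t)$, and then to prove that this is finite by splitting $K(t)=\beta(t)/m_\zeta$ into the four pieces supplied by Lemma~\ref{lem-3-4}.

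First I would record the two elementary facts that make the reduction work. Since $\alpha$ is non-increasing (and $1/\sqrt{t}$ is non-increasing as well), $\alpha(t+1)\le\alpha(1)$ for every $t$. Moreover the inner partial sums grow at most like $\sqrt{t}$: by Assumption~\ref{event-triggered} we have $\sum_{s=0}^{t-1}\tau(s)\le E_\tau$, while $\sum_{s=1}^{t}\alpha(s)\le\sqrt{t}\,\big(\sum_{s=1}^{\infty}\alpha(s)^2\big)^{1/2}$ by Cauchy--Schwarz under Assumption~\ref{stepsize}, and $\sum_{s=1}^{t}s^{-1/2}\le 2\sqrt{t}$ in the case $\alpha(t)=1/\sqrt{t}$. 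Hence $\alpha(t+1)\sum_{s=0}^{t-1}\big(\alpha(s+1)+\tau(s)\big)\le \alpha(1)\big(C_\alpha\sqrt{t}+E_\tau\big)$ for a fixed constant $C_\alpha$; combined with $K(t)\le\sqrt{t}\,K(t)$, both terms inside the bracket of Lemma~\ref{lem-4-4} are dominated by a constant times $\alpha(t+1)\sqrt{t}\,K(t)$. Using $\alpha(t+1)\le\alpha(1)$ and $m_\zeta>0$ once more, it remains to show $\sum_{t\ge1}\sqrt{t}\,\beta(t)<\infty$.

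Next I would estimate the four pieces of $\beta(t)=m\big((F_\zeta-F_\zeta(t))+C_0\lambda^t+C_0\lambda^{t/2}F_\zeta(t)+\zeta([t/2]+1)/(1-\lambda)\big)$ one at a time. The two geometric terms are harmless: $\sqrt{t}\,\lambda^t$ and $\sqrt{t}\,\lambda^{t/2}$ are summable (geometric decay beats $\sqrt{t}$) and $F_\zeta(t)\le F_\zeta<\infty$. For the tail term the key inequality is $\sqrt{t}\,(F_\zeta-F_\zeta(t))=\sqrt{t}\sum_{s>t}\zeta(s)\le\sum_{s>t}\sqrt{s}\,\zeta(s)$, so swapping the order of summation gives $\sum_{t\ge1}\sqrt{t}(F_\zeta-F_\zeta(t))\le\sum_{s\ge2}(s-1)\sqrt{s}\,\zeta(s)\le F_{\zeta_{3/2}}<\infty$. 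For the last term I would reindex by $s=[t/2]+1$: each $s\ge1$ is hit by at most the two values $t\in\{2s-2,2s-1\}$, both with $\sqrt{t}\le\sqrt{2s}$, whence $\sum_{t\ge1}\sqrt{t}\,\zeta([t/2]+1)\le 2\sqrt{2}\sum_{s\ge1}\sqrt{s}\,\zeta(s)\le 2\sqrt{2}\,F_{\zeta_{3/2}}<\infty$. Summing the four bounds completes the argument.

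The main obstacle -- and the precise place where Assumption~\ref{y-trigger} needs the strengthened condition $\sum_t t^{3/2}\zeta(t)<\infty$ rather than mere summability of $\zeta$ -- is the tail term $F_\zeta-F_\zeta(t)$: one factor $\sqrt{t}$ is already consumed by the inner $\alpha$-partial sum inside the original double sum, and this term must still be summed against a second $\sqrt{t}$; the double-summation exchange then converts the product of these two $\sqrt{t}$ weights into the weight $(s-1)\sqrt{s}$, comparable to $s^{3/2}$, on $\zeta(s)$, which is exactly $F_{\zeta_{3/2}}$. Everything else is a routine geometric-series or moment estimate, so I would keep the write-up focused on this reduction.
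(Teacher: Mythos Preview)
Your proof is correct but takes a longer route than the paper's. The paper directly invokes the conclusion $\lim_{t\to\infty}t^{3/2}K(t)=0$ of Lemma~\ref{lem-3-4}, which immediately yields $\sum_t K(t)<\infty$, and then exploits the monotonicity of $\alpha$ to keep the outer factor $\alpha(t+1)$ working: under Assumption~\ref{stepsize} it uses $\alpha(t+1)\alpha(s+1)\le\alpha(s+1)^2$ to bound $\alpha(t+1)\sum_{s=0}^{t-1}\alpha(s+1)\le\sum_{s}\alpha(s+1)^2$, and for $\alpha(t)=1/\sqrt{t}$ it uses $\tfrac{1}{\sqrt{t+1}}\cdot 2\sqrt{t}\le 2$. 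In either case the whole expression is controlled by a constant times $\sum_t K(t)$. You instead discard the outer factor via $\alpha(t+1)\le\alpha(1)$, which forces the stronger intermediate claim $\sum_t\sqrt{t}\,\beta(t)<\infty$, and you then establish that by dissecting the four constituents of $\beta(t)$ rather than quoting Lemma~\ref{lem-3-4}. The paper's approach buys brevity and a clean reduction to an already-proved fact; your approach buys a more explicit bookkeeping of which moment of $\zeta$ is spent at each step, although your closing remark that the $t^{3/2}$ weight is \emph{forced} by the inner $\alpha$-partial sum overstates the case---in the paper's argument that $\sqrt{t}$ is absorbed by the outer $\alpha(t+1)$, and the $t^{3/2}$ hypothesis enters only indirectly through the citation of Lemma~\ref{lem-3-4}.
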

\begin{proof}
From Lemma \ref{lem-3-4}, we know that $\lim_{t \rightarrow \infty} t^{3/2} K(t) = 0$. Using this fact the summability of $\tau (s)$, it easily follows that
\begin{equation*}
\sum_{t=1}^{\infty} \alpha (t+1)\Big[ K(t) + \Big( \sum_{s=0}^{t-1} \tau (s)\Big) K(t)\Big] < \infty.
\end{equation*}
Next, for $\alpha (t)$ satisfying Assumption \ref{stepsize}, we observe that
\begin{equation*}
\begin{split}
\sum_{t=1}^{\infty}\alpha (t+1) K(t) \sum_{s=0}^{t-1} \alpha (s+1) & \leq \sum_{t=1}^{\infty} K(t) \sum_{s=0}^{t-1} \alpha (s+1)^2 <\infty.
\end{split}
\end{equation*}
For $\alpha (t) = 1/\sqrt{t}$, by using that $\sum_{s=0}^{t-1} 1/\sqrt{s+1} \leq 2 \sqrt{t}$, we deduce
\begin{equation*}
\begin{split}
\sum_{t=1}^{\infty}\alpha (t+1) K(t) \sum_{s=0}^{t-1} \alpha (s+1) & \leq 2\sum_{t=1}^{\infty} K(t) <\infty.
\end{split}
\end{equation*}
The proof is done.
\end{proof}

\subsection{Proof of Theorem \ref{case2_main}}
We now turn to the proof of Theorem \ref{case2_main}. we recall that
 \begin{equation*}
 H(-1) = 1\quad \textrm{and}\quad H(t) = \prod_{k=0}^{t} (1+\tau(k))\quad \textrm{for}\quad t\geq 1
 \end{equation*}
 and
 \begin{equation*}
 S(0) = 0\quad \textrm{and}\quad S(t) =\sum_{s=0}^{t-1} \frac{\alpha (s+1)}{H(s)}\quad \textrm{for}\quad t\geq 1.
 \end{equation*}
 
 First we find the boundedness of $H(t)$ and $S(t)$.
\begin{lem}\label{s(t+1)}
Let $\alpha(t)=\frac{1}{\sqrt{t}}$ and Assumption 2.4 hold. Then we have
\begin{equation}\label{eq-4-13}
\sup_{t\geq 0} H(t) < e^{E_{\tau}}\quad \textrm{and}\quad S(t)\geq  e^{-E_{\tau}}\sqrt{t},
\end{equation}
where $E_{\tau} = \sum_{t=0}^{\infty}\tau (t) <\infty$.
\end{lem}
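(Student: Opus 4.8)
\emph{Plan.} I would establish the two inequalities in \eqref{eq-4-13} separately; both reduce to elementary manipulations once the product structure of $H(t)$ is exploited. For the first, since $H(t)=\prod_{k=0}^{t}(1+\tau(k))$ and $1+x\le e^{x}$ for $x\ge 0$, multiplying the bounds $1+\tau(k)\le e^{\tau(k)}$ over $0\le k\le t$ gives $H(t)\le \exp\bigl(\sum_{k=0}^{t}\tau(k)\bigr)\le \exp\bigl(\sum_{k=0}^{\infty}\tau(k)\bigr)=e^{E_{\tau}}$, which is finite by Assumption \ref{event-triggered}. Strictness of $\sup_{t\ge 0}H(t)<e^{E_{\tau}}$ comes from the thresholds being positive, so that $1+\tau(k)<e^{\tau(k)}$ and the tail $\sum_{k=t+1}^{\infty}\tau(k)$ is strictly positive.

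For the lower bound on $S(t)$, I would first note that each factor satisfies $1+\tau(k)\ge 1$, so $\{H(s)\}_{s\ge0}$ is non-decreasing and hence $H(s)\le\sup_{t\ge0}H(t)\le e^{E_{\tau}}$, i.e.\ $1/H(s)\ge e^{-E_{\tau}}$ for all $s$. Substituting $\alpha(s+1)=1/\sqrt{s+1}$ into $S(t)=\sum_{s=0}^{t-1}\alpha(s+1)/H(s)$ then yields
\begin{equation*}
S(t)\ \ge\ e^{-E_{\tau}}\sum_{s=0}^{t-1}\frac{1}{\sqrt{s+1}}\ =\ e^{-E_{\tau}}\sum_{k=1}^{t}\frac{1}{\sqrt{k}}\ \ge\ e^{-E_{\tau}}\sqrt{t},
\end{equation*}
where the last inequality is the elementary estimate $\sum_{k=1}^{t}k^{-1/2}\ge\sqrt{t}$, which follows from the telescoping identity $\sqrt{t}=\sum_{k=1}^{t}(\sqrt{k}-\sqrt{k-1})$ together with $\sqrt{k}-\sqrt{k-1}=\frac{1}{\sqrt{k}+\sqrt{k-1}}\le\frac{1}{\sqrt{k}}$ (a short induction on $t$ also works).

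I do not expect a genuine obstacle in this lemma; the computation is routine. The only points deserving a moment's care are the strictness in $\sup_{t}H(t)<e^{E_{\tau}}$, which uses positivity of the triggering thresholds (consistent with the triggering rule of Algorithm \ref{algo}), and the elementary partial-sum bound $\sum_{k=1}^{t}k^{-1/2}\ge\sqrt{t}$ invoked above.
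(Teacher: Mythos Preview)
Your proposal is correct and follows essentially the same route as the paper's proof: both use $1+x\le e^{x}$ to bound the product $H(t)$ by $e^{E_\tau}$, and then feed this uniform bound into the definition of $S(t)$ together with the elementary estimate $\sum_{k=1}^{t}k^{-1/2}\ge\sqrt{t}$. You supply a bit more detail than the paper (the explicit justification of the partial-sum bound and the source of the strict inequality), but the argument is the same.
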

\begin{proof}
By applying the inequality $1+x \leq e^x$ for $x \geq 0$ we estimate $H(t)$ as 
\begin{equation*}
\sup_{t\geq 0} H(t) <\exp{(\sum^\infty_{t=0}\tau(t))}=e^{E_{\tau}}.
\end{equation*}
Using this inequality, we deduce the following estimate
\begin{equation*}
\begin{split}
S(t)& = \sum_{s=0}^{t-1} \frac{\alpha (s+1)}{H(s)} \geq e^{-E_{\tau}} \sum_{s=1}^{t} \frac{1}{\sqrt{s}} \geq e^{-E_{\tau}} \sqrt{t}\quad \forall~ t \geq 1.
\end{split}
\end{equation*}
The proof is done.
\end{proof}
To prove Theorem \ref{case2_main}, we first modify Lemma \ref{submain} which states the boundedness of $\sum_{i=1}^m \big(f_i(\bar{x}(t))-f_i(x)\big)$, by replacing $\bar{x}(t)$ to $\frac{\sum^T_{t=0}\frac{\alpha(t+1)}{H(t)}B_\zeta\bar{x}(t)}{S(T+1)}$.
\begin{lem}\label{case2_submain}
Suppose that all the conditions are same as in Theorem \ref{case2_main}. Then we have
\begin{equation*}
\begin{split}
&f\bigg(\frac{\sum^T_{t=0}\frac{\alpha(t+1)}{H(t)}B_{\zeta}\bar{x}(t)}{S(T+1)}\bigg) - f(x) 
\\
&\leq \frac{me^{E_{\tau}}}{2\sqrt{T+1}}J_1 (T)+ \frac{2mDe^{E_{\tau}}}{\delta \sqrt{T+1}}J_2 (T) + \frac{2mDe^{E_\tau}}{ \delta\sqrt{T+1}}J_3 (T)
\end{split}
\end{equation*}
for any $T \in \mathbb{N}$, where $J_1 (T), J_2 (T)$, and $J_3 (T)$ are defined in Theorem \ref{case2_main}
\end{lem}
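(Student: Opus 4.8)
The plan is to convert the one-step inequality of Corollary~\ref{main} into a statement about the weighted average $\sum_{t=0}^{T}\tfrac{\alpha(t+1)}{H(t)}B_{\zeta}\bar{x}(t)/S(T+1)$, the weights $\alpha(t+1)/H(t)$ being chosen precisely so that the multiplicative factor $1+\tau(t)$ in that recursion telescopes. Written at index $t$, Corollary~\ref{main} reads
\[
\|B_{\zeta}\bar{x}(t+1)-x\|^2\le(1+\tau(t))\|B_{\zeta}\bar{x}(t)-x\|^2-\tfrac{2\alpha(t+1)B_{\zeta}}{m}\big(f(B_{\zeta}\bar{x}(t))-f(x)\big)+c(t)+d(t).
\]
Dividing by $H(t)=\prod_{k=0}^{t}(1+\tau(k))$ and using $H(t)=(1+\tau(t))H(t-1)$ with $H(-1)=1$, the quantity $a_t:=\|B_{\zeta}\bar{x}(t)-x\|^2/H(t-1)$ satisfies $a_{t+1}\le a_t-\tfrac{2\alpha(t+1)B_{\zeta}}{m\,H(t)}\big(f(B_{\zeta}\bar{x}(t))-f(x)\big)+\tfrac{c(t)+d(t)}{H(t)}$. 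Summing over $t=0,\dots,T$, dropping $a_{T+1}\ge0$, and recalling $S(T+1)=\sum_{t=0}^{T}\alpha(t+1)/H(t)$ then yields
\[
\tfrac{2B_{\zeta}}{m}\sum_{t=0}^{T}\tfrac{\alpha(t+1)}{H(t)}\big(f(B_{\zeta}\bar{x}(t))-f(x)\big)\le\|B_{\zeta}\bar{x}(0)-x\|^2+\sum_{t=0}^{T}\tfrac{c(t)+d(t)}{H(t)}.
\]

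Next I would invoke convexity of $f=\sum_i f_i$: since the numbers $\tfrac{\alpha(t+1)/H(t)}{S(T+1)}$, $t=0,\dots,T$, form a probability vector, Jensen's inequality gives
\[
f\!\left(\frac{\sum_{t=0}^{T}\tfrac{\alpha(t+1)}{H(t)}B_{\zeta}\bar{x}(t)}{S(T+1)}\right)-f(x)\le\frac{1}{S(T+1)}\sum_{t=0}^{T}\tfrac{\alpha(t+1)}{H(t)}\big(f(B_{\zeta}\bar{x}(t))-f(x)\big),
\]
so, combined with the previous display, the left-hand side of the lemma is at most $\tfrac{m}{2B_{\zeta}S(T+1)}\big[\|B_{\zeta}\bar{x}(0)-x\|^2+\sum_{t=0}^{T}(c(t)+d(t))/H(t)\big]$. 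I would then insert the two crude bounds $H(t)\ge1$ (because $\tau(k)\ge0$) and $S(T+1)\ge e^{-E_{\tau}}\sqrt{T+1}$ from Lemma~\ref{s(t+1)}, which reduces the task to estimating $\|B_{\zeta}\bar{x}(0)-x\|^2+\sum_{t=0}^{T}c(t)+\sum_{t=0}^{T}d(t)$.

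For the $c$-sum, with $\alpha(t+1)=1/\sqrt{t+1}$ one has $\sum_{t=0}^{T}\alpha(t+1)^2=\sum_{s=1}^{T+1}1/s\le1+\ln(T+1)$, whence $\sum_{t=0}^{T}c(t)\le B_{\zeta}\big[2D^2(1+\ln(T+1))+2E_{\tau,2}(T)+E_{\tau}(T)\big]$; together with the boundary term $\|B_{\zeta}\bar{x}(0)-x\|^2$ and the prefactor $\tfrac{m e^{E_{\tau}}}{2B_{\zeta}\sqrt{T+1}}$ this is exactly the $\tfrac{m e^{E_{\tau}}}{2\sqrt{T+1}}J_1(T)$ contribution. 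For the $d$-sum, I would write $d(t)=\tfrac{4\alpha(t+1)B_{\zeta}D}{m}\sum_{i=1}^{m}\|\hat{z}_i(t+1)-B_{\zeta}\bar{x}(t)\|$, swap the order of summation, and apply Corollary~\ref{specific}, which bounds each $\sum_{t=0}^{T}\alpha(t+1)\|\hat{z}_i(t+1)-B_{\zeta}\bar{x}(t)\|$ by $\tfrac1\delta(J_2(T)+J_3(T))$; summing the $m$ identical bounds and multiplying by the prefactor produces $\tfrac{2mDe^{E_{\tau}}}{\delta\sqrt{T+1}}J_2(T)+\tfrac{2mDe^{E_{\tau}}}{\delta\sqrt{T+1}}J_3(T)$. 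Adding the three contributions delivers the stated estimate.

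The only step that is not routine is the division by $H(t)$: this is the device that turns the $(1+\tau(t))$-growth of Corollary~\ref{main} into a genuinely telescoping sum, and it is exactly why the running average $\tilde{z}_i$ in Theorem~\ref{case2_main} is built with the weights $\alpha(t+1)/H(t)$ rather than $\alpha(t+1)$. Everything else is bookkeeping; the points needing care are keeping the factors $B_{\zeta}$ and $m$ consistent after dividing by $\tfrac{2B_{\zeta}}{m}$, and matching the boundary term with the $\|\bar{x}(0)-x\|^2/B_{\zeta}$ piece of $J_1(T)$.
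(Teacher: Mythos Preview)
Your proposal is correct and follows essentially the same route as the paper: the paper also divides the one-step inequality by $H(t)$ so that the factor $1+\tau(t)$ telescopes (it restarts from the intermediate inequality \eqref{eq-3-2} rather than quoting Corollary~\ref{main}, but this is the same computation), sums over $t$, bounds the resulting terms using $H(t)\ge 1$, $\sum_{t=0}^{T}\alpha(t+1)^2\le 1+\ln(T+1)$, and Corollary~\ref{specific}, and finally applies convexity together with $S(T+1)\ge e^{-E_{\tau}}\sqrt{T+1}$. The only cosmetic difference is the order in which the convexity step and the division by $S(T+1)$ are performed.
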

\begin{proof}
We recall from \eqref{eq-3-2} the following inequality
\begin{align}\label{eq-3-2}
&\quad\sum_{i=1}^m \big(f_i(B_{\zeta}\bar{x}(t))-f_i(x)\big) \\
\nonumber&\leq \frac{m}{2\alpha(t+1)B_{\zeta}}(1+\tau(t))\|B_{\zeta}\bar{x}(t)-x\|^2
\\
\nonumber&\quad -\frac{m}{2\alpha (t+1)B_{\zeta}}\|B_{\zeta}\bar{x}(t+1)-x\|^2 \\
\nonumber&\quad + \frac{mB_{\zeta}}{2\alpha(t+1)}\big(2\alpha(t+1)^2 D^2 + 2 \tau(t)^2\big)
\\
\nonumber &\quad+ \frac{B_{\zeta}m \tau(t)}{2\alpha(t+1)}  + 2D \sum_{i=1}^m \|\hat{z}_i (t+1) - B_{\zeta}\bar{x}(t)\|.
\end{align}
Dividing both sides of \eqref{eq-3-2} by $\frac{mH(t)}{2\alpha(t+1)}$, we get
\begin{align}\label{eq-3-3}
&\frac{2\alpha (t+1)}{mH(t)}\sum_{i=1}^m \big(f_i(B_{\zeta}\bar{x}(t))-f_i(x)\big) \\
\nonumber&\leq  \frac{\|B_{\zeta}\bar{x}(t)-x\|^2}{B_{\zeta}H(t-1)}- \frac{\|B_{\zeta}\bar{x}(t+1)-x\|^2}{B_{\zeta}H(t)} \\
\nonumber&\quad +  \frac{B_{\zeta}}{H(t)}\bigg(2\alpha(t+1)^2 D^2 + 2 \tau(t)^2+\tau(t)\Bigg) 
\\
\nonumber&\quad +  \frac{4\alpha (t+1)D}{mH(t)}  \sum_{i=1}^m \|\hat{z}_i (t+1) -B_{\zeta} \bar{x}(t)\|. 
\end{align}
Summing this from $t=0$ to $t=T$ we obtain
\begin{align}\label{eq415}
&\sum_{t=0}^{T}\Bigg[\frac{2\alpha (t+1)}{mH(t)}\sum_{i=1}^m \big(f_i(B_{\zeta}\bar{x}(t))-f_i(x)\big) \Bigg]\\
\nonumber&\leq  \frac{\|\bar{x}(0)-x\|^2}{B_{\zeta}H(-1)}- \frac{\|\bar{x}(T+1)-x\|^2}{B_{\zeta}H(T)} \\
\nonumber&\quad +  \sum_{t=1}^T \frac{B_{\zeta}}{H(t)}\bigg(2\alpha(t+1)^2 D^2 + 2 \tau(t)^2+\tau(t)\Bigg) 
\\
\nonumber &\quad + \sum_{t=1}^T \frac{4\alpha (t+1)D}{mH(t)}  \sum_{i=1}^m \|\hat{z}_i (t+1) - B_{\zeta}\bar{x}(t)\|.
\end{align}
This, together with the fact that $H(-1)=1$ and $H(t) \geq 1$, gives
\begin{equation}\label{eq-3-40}
\begin{split}
&\sum_{t=0}^{T}\Bigg[\frac{2\alpha (t+1)}{mH(t)}\sum_{i=1}^m \big(f_i(B_{\zeta}\bar{x}(t))-f_i(x)\big) \Bigg]\\
 &\leq  \frac{\|\bar{x}(0)-x\|^2}{B_{\zeta}}   +  \sum_{t=0}^T  \bigg(2\alpha(t+1)^2 D^2 + 2 \tau(t)^2+\tau(t) \bigg)B_{\zeta}
\\
&\quad   +  \sum_{t=0}^T  \bigg(  \frac{4\alpha (t+1)D}{m}  \sum_{i=1}^m \|\hat{z}_i (t+1) -B_{\zeta} \bar{x}(t)\|\bigg).
\end{split}
\end{equation}
We find
\begin{equation}\label{eq-3-41}
  \sum^T_{t=0} 2\alpha(t+1)^2 D^2 = 2D^2 \sum^{T+1}_{t=1}\frac{1}{t}\leq 2D^2 \Big(1+ \ln{(T+1)}\Big),
\end{equation}
and by definition \eqref{eq-2-8} we have
\begin{equation}\label{eq-3-42}
 \sum^T_{t=0} \Big(2\tau(t)^2 +\tau (t)\Big) = 2E_{\tau,2}(T)+E_{\tau}(T).
\end{equation}
Finally we estimate the last term of the right hand side of \eqref{eq-3-40} using Corollary \ref{specific} as follows:
\begin{equation*}
\begin{split}
&\sum_{t=0}^T  \bigg(  \frac{4\alpha (t+1)D}{m}  \sum_{i=1}^m \|\hat{z}_i (t+1) - B_{\zeta}\bar{x}(t)\|\bigg) \\
&\medmath{\leq4D \Big( \frac{C_0 }{\delta(1-\lambda)}  \|x(0)\|_1  + \frac{4mC_0  E_{\tau}(T)}{\delta(1-\lambda)}  +  \frac{C_0mD}{\delta(1-\lambda)} (1+\ln(T))\Big)}
\\
&\quad  \medmath{+\frac{4D}{\delta}  \sum_{t=0}^T K(t) \alpha (t+1) \Big[ \|x(0)\|_1 + \sum_{s=0}^{t-1} (\alpha (s+1) D + \tau (s))\Big].}
\end{split}
\end{equation*}
Putting this estimate, \eqref{eq-3-41} and \eqref{eq-3-42} in \eqref{eq-3-40}, we achieve the following estimate
\begin{align*} 
&\sum_{t=0}^{T}\Bigg[\frac{2\alpha (t+1)}{mH(t)}\sum_{i=1}^m \big(f_i(B_{\zeta}\bar{x}(t))-f_i(x)\big) \Bigg]\\
\nonumber&\leq   J_1 (T)+  \frac{4D}{\delta} J_2 (T) +\frac{4D}{\delta} J_3 (T).
\end{align*}
%\begin{align*}
%&f\bigg(\frac{\sum^T_{t=0}\frac{\alpha(t+1)}{H(t)}\bar{x}(t)}{S(T+1)}\bigg)-f(x^*) \\
%&\leq \frac{m}{2}\frac{\|\bar{x}(0)-x\|}{\sqrt{T+1}} e^{C_{\tau}} + \frac{mD^2}{\sqrt{T+1}} (1+\ln{(T+1)})e^{C_{\tau}}\\
%&\quad+\frac{2m}{\sqrt{T+1}}C_{\tau,2}e^{C_{\tau}}+\frac{m}{2\sqrt{T+1}}C_{\tau} e^{C_{\tau}}\\
%&\quad+\frac{2Dm}{\sqrt{{T+1}}}\bigg(\frac{2C}{\delta(1-\lambda)}+ \frac{2CD(1+\ln{(T)})}{\delta(1-\lambda)}+ \frac{7Cm C_{\tau}}{\delta(1-\lambda)}
%\bigg)e^{C_{\tau}}.
%\end{align*}
Now we set $S(T) = \sum_{t=0}^{T-1} \frac{\alpha (t+1)}{H(t)}$ for $T \in \mathbb{N}$ and divide the both sides by $\frac{2S(T+1)}{m}$. Then we apply the convexity of $f_i$ in the left hand side and use the lower bound $S(T+1) \geq e^{-E_{\tau}}{\sqrt{T+1}}$ to the right hand side, which leads to
\begin{equation*}
\begin{split}
&f\Bigg(\frac{\sum^T_{t=0}\frac{\alpha(t+1)}{H(t)}B_{\zeta}\bar{x}(t)}{S(T+1)}\Bigg) - f(x) 
\\
&\leq \frac{me^{E_{\tau}}}{2\sqrt{T+1}}J_1 (T)+ \frac{2mDe^{E_{\tau}}}{\delta \sqrt{T+1}}J_2 (T)  + \frac{2mDe^{E_\tau}}{ \delta\sqrt{T+1}}J_3 (T).
\end{split}
\end{equation*}
The proof is finished.
\end{proof}
Now we are ready to give the proof of Theorem \ref{case2_main}.
\begin{proof}[Proof of Theorem \ref{case2_main}]
Using the definition of $\tilde{z}_i$ and Assumption 2.1, we find
\begin{equation*}
\begin{split}
&f(\tilde{z}_i(T+1))-f\bigg(\frac{\sum^T_{t=0}\frac{\alpha(t+1)}{H(t)}B_{\zeta}\bar{x}(t)}{S(T+1)}\bigg) 
\\
& =f\bigg(\frac{\sum_{t=0}^T \frac{\alpha (t+1)}{H(t)} \hat{z}_i (t+1)}{S(T+1)} \bigg)-f\bigg(\frac{\sum^T_{t=0}\frac{\alpha(t+1)}{H(t)}B_{\zeta}\bar{x}(t)}{S(T+1)}\bigg) 
\\
&\leq \frac{mD}{S(T+1)}\sum^T_{t=0}\frac{\alpha(t+1)}{H(t)}\|\hat{z}_i(t+1)-B_{\zeta}\bar{x}(t)\|.
\end{split}
\end{equation*}
Then by Corollary \ref{specific} with the fact that $H(t) \geq 1$ and \eqref{eq-4-13}, we have
\begin{align*}
&f(\tilde{z}_i(T+1))-f\bigg(\frac{\sum^T_{t=0}\frac{\alpha(t+1)}{H(t)}B_{\zeta}\bar{x}(t)}{S(T+1)}\bigg) 
\\
 &\quad\leq \frac{mDe^{E_{\tau}}}{\delta \sqrt{T+1}}J_2 (T)
+ \frac{mDe^{E_{\tau}}}{\delta \sqrt{T+1}} J_3 (T).
\end{align*}
Combining this inequality with Lemma \ref{case2_submain}, we obtain
\begin{align*}
&f(\tilde{z}_i(T+1))-f(x^*) \\
&\leq \frac{me^{E_{\tau}}}{2\sqrt{T+1}}J_1 (T) + \frac{3mDe^{E_{\tau}}}{\delta \sqrt{T+1}}J_2 (T)  + \frac{3mDe^{E_\tau}}{ \delta\sqrt{T+1}}J_3 (T).
\end{align*}
which is the desired estimate. Moreover, we see that the right hand side is bounded by $O(\log (T+1)/ \sqrt{T+1})$ using Lemma \ref{lem-4-4}.
\end{proof}
  
\section{Simulations}

In this section, we present simulation results of the proposed event-triggered gradient-push method to demonstrate that the theoretical results can be realized in practice. We consider the decentralized least squares problem:
\begin{equation*}
\min_{x\in\mathbb{R}^d}~\sum_{i=1}^m f_i (x)\quad \textrm{with}\quad f_i (x) =\|q_i - p^T_ix\|^2,
\end{equation*} 
where, each agent $i$ in $\mathcal{V}= \{1,\cdots,m\}$ is given the local cost function $f_i$. The variable $p_i\in \mathbb{R}^{d\times p}$ is the input data and the variable $q_i\in\mathbb{R}^{p}$ is the output data. The data are generated according to the linear regression model $q_i = p_i^T \tilde{x} + \varepsilon_{i}$ where $\tilde{x}\in \mathbb{R}^d$ is the true weight vector, $p_i$'s are uniformly random values between 0 and 1, and $\varepsilon_i$'s are jointly Gaussian and zero mean. The initial points $x_i(0)$ are independent random variables, generated by a standard Gaussian distribution. In this simulation, we set the problem dimensions and the number of agents as $d=5 $, $p=1 $, and $m=50 $. We use connected directed graph where every node has four out-neighbors.

\medskip

\noindent \textbf{Test 1.} Here we fix $\alpha (t) = 1/t^{0.52}$ which satisfies Assumption \ref{stepsize} and $\zeta(t) = 1/(3t^3)$ which satisfies Assumption \ref{y-trigger}, and  consider various choices of $\tau (t)$. We  measure the relative distance between the states $z_i (t)$ and the optimal point $x^*$ the value 
\begin{equation*}
R_d (t) = \frac{\sum_{i=1}^m \|z_i (t) - x^*\|}{\sum_{i=1}^m \|z_i (0) - x^*\|},
\end{equation*}
We set the termination time $k_f$ as the first time $k \in \mathbb{N}$ when $R_d (k) < 10^{-2}$. And we let $N_x$ and $N_y$ be the average of total number of triggers for all agents until the termination time associated with $\tau(t)$ and $\zeta(t)$, respectively. Table \ref{known results} indicates the average of those values depending on $\tau(t)$ and $\zeta(t)$ in 100 trials.

\begin{table*}
\begin{tabularx}{\textwidth}{@{}l*{10}{C}c}
\toprule
$\tau(t)$    & 0 & 0 &$1/ t^{1.1}$ & $1/t^{1.1}$ & $1/t^{1.3}$ & $1/t^{1.3}$ & $1/t^{1.5}$ & $1/t^{1.5}$ & $1/t^{1.7}$ &$1/t^{1.7}$ \\ 
$\zeta(t)$  & 0      & $1/ (3t^3)$           &  0      & $1/ (3t^3)$   &  0      & $1/ (3t^3)$  &  0      & $1/ (3t^3)$      & 0      & $1/ (3t^3)$      \\ 
\midrule
$N_x$ & 11425        & 11305      & 3125   & 3152   & 3299    & 3288   & 8860  & 8767     & 15537   & 11177     \\ 
\addlinespace
$N_y$    & 11425     & 26          & 72705  & 32  & 15696   & 27  & 11644   & 26    & 15572    & 26    \\ 
\addlinespace
$k_f$      & 11425       & 11305          & 72705   & 72757 & 15696   & 15572 & 11644   & 11514    & 15572   & 11207      \\
\bottomrule
\end{tabularx}
\vspace{0.1cm}
\caption{The number of triggers and termination time depending on $\tau(t)$ and $\zeta(t)$}
\label{known results}
\end{table*}
We first look at the effect of $\zeta(t)$, the threshold  for variables $y_i(t)$. Table \ref{known results} shows that an existence of the threshold $(\zeta \neq 0)$ does not bring a big difference in the termination time if we compare the cases $\zeta (t) =0$ and $\zeta (t) = 1/(3t^3)$ with same $\tau (t)$, but there is a big improvement in the number of triggers for $y_i(t)$. Next we discuss the values $N_x$ and $k_f$ of Table \ref{known results} in terms of $\tau(t)$, the threshold for  variables $x_i(t)$. As in Table \ref{known results}, some cases give us similar or worse results compared to the cases $\tau(t)=0$. For $\tau(t)=1/t^{1.1}$, the number of triggers is decreased by more than $70\%$, and the termination time increased by more than $530\%$. For $\tau(t) = 1/t^{1.7}$, both the termination time and the number of triggers  increased by almost $36\%$ when $\zeta(t)=0$ and remain similar when $\zeta(t)=1/(3t^3)$ compared to the cases $\tau(t)=0$. For $\tau(t)=1/t^{1.5}$, the termination time is almost same, the number of triggers  decreased by more than $20\%$. These results show that the proposed gradient-push with event-triggered communication with proper $\tau(t)$ and $\zeta(t)$ can diminish the  number of communications to achieve the convergence  compared to the gradient-push algorithm without triggering. %While the case $\tau (t) = 1/t^{1.5}$ and $\zeta(t) = 1/(3t^3)$ has the best result in our experiment, 
The threshold functions $\tau(t)$ and $\zeta(t)$ should be chosen carefully considering the characteristics of the given optimization problem.

\medskip 

\begin{figure}[!htbp]
\centering
   \includegraphics[width=7.4cm]{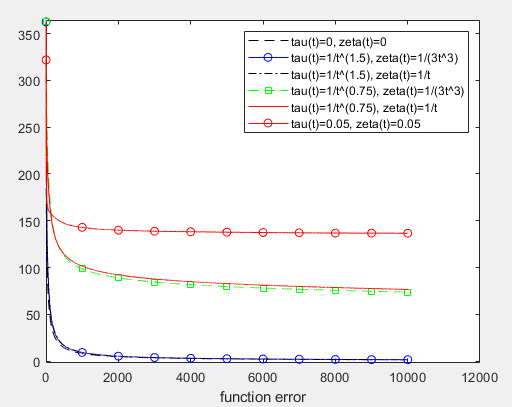} 
   \hfil
\caption{The values of $R_f (t)$ with different choices of $\tau (t)$.}
\label{figure1}
\end{figure}

\begin{figure}[!htbp]
\centering
 \includegraphics[width=7.4cm]{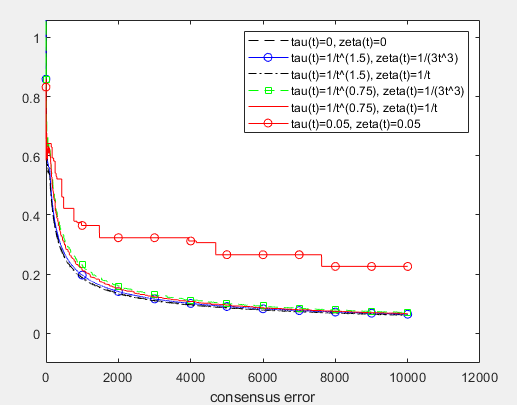}
   \hfil
\caption{The values of $R_c (t)$  with different choices of $\tau (t)$.}
\label{figure1}
\end{figure}

\noindent \textbf{Test 2.} Here we fix $\alpha (t) =1/\sqrt{t}$ and take several choices of $\tau (t)$ and $\zeta(t)$. We measure the relative cost error and the consensus error given by
\begin{equation*}
R_f (t) =  \sum_{i=1}^m ( f(\tilde{z}_i (t)) -f^*),\quad 
R_c (t) =  \max_{i,j \in V} \|z_i (t) - z_j (t)\|. 
\end{equation*}
For $\tau(t)$, we consider two cases where $\tau(t) = 1/t^{1.5}$ satisfying Assumption \ref{event-triggered} and $\tau(t) = 1/t^{0.75}$ not satisfying Assumption \ref{event-triggered}. And for $\zeta(t)$, we consider also consider two cases where $\zeta(t) = 1/(3t^{3})$ satisfying Assumption \ref{y-trigger} and $\zeta(t) = 1/t$ not satisfying Assumption \ref{y-trigger}. Additionally, we test two constant cases $\tau (t), \zeta(t) =0$ and $\tau (t),\zeta(t) =0.05$. Figure 1 depicts the graph of the values of $R_f (t)$ versus the iteration time. The result shows that the cost error decreases to zero when $\sum_{t=1}^{\infty} \tau (t) < \infty$  while it does not converge to zero when $\sum_{t=1}^{\infty} \tau (t) = \infty$. This agrees with the convergence result of Theorem \ref{case2_main}. Figure 2 illustrates the consensus error $R_c (t)$. The result shows that the consensus error decreases to zero for any choices $\tau (t)$ except the case $\tau (t) =0.05$. This numerical result supports the theoretical result obtained in Corollary \ref{convergent}.

\section{Conclusion}
In this work, we considered the gradient-push algorithm with event-triggered communication for distributed optimization problem whose agents are connected by directed graphs. We showed that by the algorithm each agent's state converges to a common minimizer under a diminishing and summability condition on the stepsize and the triggering function. Numerical simulations have been conducted to support the convergence results. It would be interesting further about how to choose an optimal triggering function for reducing the communication burden, reflecting various  elements such as the connectivity of graphs and the number of agents.

%\begin{IEEEbiographynophoto}{Jane Doe}
%Biography text here without a photo.
%\end{IEEEbiographynophoto}

%\begin{IEEEbiography}[{\includegraphics[width=1in,height=1.25in,clip,keepaspectratio]{}}]{IEEE Publications Technology Team}
%In this paragraph you can place your educational, professional background and research and other interests.\end{IEEEbiography}

\end{document}